\documentclass[a4paper]{amsart}
\usepackage{amsthm}
\usepackage{amssymb}
\usepackage{enumerate}
\usepackage[mathcal]{euscript}
\usepackage{dsfont}
\usepackage{hyperref}
\usepackage{tikz-cd}

\hyphenation{co-limit}


\numberwithin{equation}{section}

\theoremstyle{plain}

\newtheorem{lemma}{Lemma}[section]
\newtheorem{theorem}[lemma]{Theorem}
\newtheorem{proposition}[lemma]{Proposition}
\newtheorem{corollary}[lemma]{Corollary}

\theoremstyle{definition}
\newtheorem{construction}[lemma]{Construction}

\newtheorem{example}[lemma]{Example}

\newtheorem{assumption}[lemma]{Assumption}

\theoremstyle{remark} 
\newtheorem{remark}[lemma]{Remark} 
\newtheorem*{claim}{Claim} 


\hyphenation{Grothen-dieck} 
\hyphenation{com-mu-ta-tive}


\newcommand{\cat}{\mathsf}
\newcommand{\dbcat}[1]{\cat{D}^{\mathrm{b}}(\operatorname{mod}#1)}

\newcommand{\End}{\operatorname{End}}
\newcommand{\Ext}{\operatorname{Ext}}

\newcommand{\fgc}{(\textrm{Fg})\,}

\newcommand{\GP}{\operatorname{GP}}
\newcommand{\GInj}{\operatorname{GInj}}
\newcommand{\Ginj}{\operatorname{Ginj}}
\newcommand{\oGInj}{\operatorname{\overline{GInj}}}
\newcommand{\oGinj}{\operatorname{\overline{Ginj}}}
\newcommand{\Gproj}{\operatorname{Gproj}}
\newcommand{\GProj}{\operatorname{GProj}}
\newcommand{\uGproj}{\operatorname{\underline{Gproj}}}
\newcommand{\uGProj}{\operatorname{\underline{GProj}}}

\newcommand{\HH}{\operatorname{HH}}
\newcommand{\Hom}{\operatorname{Hom}}

\newcommand{\PHom}{\operatorname{PHom}}

\newcommand{\inj}{\operatorname{inj}}
\newcommand{\Inj}{\operatorname{Inj}}
\newcommand{\injdim}{\operatorname{inj{.}dim}}

\renewcommand{\mod}{\operatorname{mod}}
\newcommand{\Mod}{\operatorname{Mod}}

\newcommand{\proj}{\operatorname{proj}}

\newcommand{\Proj}{\operatorname{Proj}}
\newcommand{\Rad}{\operatorname{Rad}}

\newcommand{\oHom}{\overline{\Hom}}
\newcommand{\uHom}{\underline{\Hom}}
\newcommand{\sing}[1]{\cat{D}_{\mathrm{sg}}(#1)}
\newcommand{\Spec}{\operatorname{Spec}}

\newcommand{\tors}{\gamma}
\newcommand{\Tr}{\operatorname{Tr}}


\newcommand{\comp}{\circ}

\newcommand{\ges}{{\scriptscriptstyle\geqslant}}

\newcommand{\kos}[2]{{#1}/\!\!/{#2}} 

\newcommand{\lotimes}{\otimes^{\mathbf L}}
\newcommand{\op}{\mathrm{op}}

\newcommand{\da}{{\downarrow}}
\newcommand{\lra}{\longrightarrow}

\newcommand{\xra}{\xrightarrow}


\newcommand{\mcV}{\mathcal{V}}

\newcommand{\sfb}{\mathsf b} 
\newcommand{\sfc}{\mathsf c}

\newcommand{\sfC}{\mathsf C}

\newcommand{\sfT}{\mathsf T} 
\newcommand{\sfU}{\mathsf U}

\newcommand{\bfD}{\mathbf D}

\newcommand{\bbZ}{\mathbb Z} 

\newcommand{\bsa}{\boldsymbol{a}} 
\newcommand{\bsb}{\boldsymbol{b}} 
 
\newcommand{\bsp}{\boldsymbol{p}}

\newcommand{\bst}{\boldsymbol{t}}

\newcommand{\fa}{\mathfrak{a}}

\newcommand{\fm}{\mathfrak{m}} 
\newcommand{\fp}{\mathfrak{p}}
\newcommand{\fq}{\mathfrak{q}} 
\newcommand{\fr}{\mathfrak{r}}

\newcommand{\vf}{\varphi}
\newcommand{\gam}{\varGamma}

\def\Si{\Sigma}

\newcommand{\bsmat}{\begin{smallmatrix}}
\newcommand{\esmat}{\end{smallmatrix}}


\title[Local duality for Gorenstein algebras]{Local duality for the singularity category of a \\ finite
  dimensional Gorenstein algebra}

\author[Benson, Iyengar, Krause, and Pevtsova]{Dave Benson, Srikanth
  B. Iyengar, Henning Krause\\  and Julia Pevtsova}

\address{Dave Benson \\ 
Institute of Mathematics\\ 
University of Aberdeen\\ 
King's College\\ 
Aberdeen AB24 3UE\\ 
Scotland U.K.}

\address{Srikanth B. Iyengar\\ 
Department of Mathematics\\
University of Utah\\ 
Salt Lake City, UT 84112\\ 
U.S.A.}

\address{Henning Krause\\ 
Fakult\"at f\"ur Mathematik\\ 
Universit\"at Bielefeld\\ 
33501 Bielefeld\\ 
Germany.}

\address{Julia Pevtsova\\ 
Department of Mathematics\\ 
University of Washington\\ 
Seattle, WA 98195\\ 
U.S.A.}

\begin{document}

\begin{abstract} 
A duality theorem for the singularity category of a finite dimensional Gorenstein algebra is proved. It complements a duality on the category of perfect complexes, discovered by Happel. One of its consequences is an analogue of Serre duality, and the existence of Auslander-Reiten triangles for the $\fp$-local and $\fp$-torsion subcategories of the derived category, for each homogeneous prime ideal $\fp$ arising from the  action of a commutative ring via Hochschild cohomology.
\end{abstract}

\keywords{Gorenstein algebra, local duality, maximal Cohen-Macaulay module, Serre duality,   singulartity category}
\subjclass[2010]{16G10 (primary); 16G50, 16E65, 16E35}

\date{\today}

\thanks{SBI was partly supported by NSF grants DMS-1503044 and DMS-1700985, and JP was  partly supported by NSF grants DMS-0953011 and DMS-1501146. The authors are grateful to the American Institute of Mathematics in San Jose, California for supporting this project by their ``Research in Squares"  program.}

\maketitle

\section{Introduction}
This work concerns duality phenomena in various triangulated categories of modules over Gorenstein algebras. By a Gorenstein algebra we mean here an algebra $A$, finite dimensional over a field $k$, with the property that $A$ has finite injective dimension both as a left module and a right module over itself. For such an $A$ the derived Nakayama functor is an equivalence:
\begin{flalign*}
&&\nu\colon \dbcat A\stackrel{\sim}\lra \dbcat{A}, \quad  \text{where} \quad
  X\xra{\ \nu\ } \Hom_k(A,k)\lotimes_A X\,.
  \end{flalign*}
Happel~\cite{Happel:1987} proved that for perfect complexes $X,Y$ there is a  natural isomorphism
\[
\Hom_k(\Hom_{\cat D}(X,Y),k) \cong \Hom_{\cat D}(Y,\nu X)\,.
\]
where  $\cat D:=\dbcat A$. In other words, the Nakayama functor on $\cat D$ restricts to a Serre functor, in the sense of Bondal and Kapranov \cite{Bondal/Kapranov:1990a}, on the full subcategory of perfect complexes. 

In this work we discover that Happel's result is only the tip of an iceberg: It is a special case of a duality  on $\cat D$, analogous to Grothendieck's local duality  for commutative Gorenstein algebras. The duality on $\cat D$ also involves a (graded-)commutative algebra, namely, $\HH^*(A/k)$, the Hochschild cohomology,  of $A$ over $k$ that acts on $\cat D$ via canonical homomorphisms of $k$-algebras
\begin{flalign*}
&& \HH^*(A/k)\lra \Ext^*_A(X,X)& & \text{for each $X\in \cat D$.}
\end{flalign*} 
In this way $\cat D$ acquires a structure of an $\HH^*(A/k)$-linear category.

In the remainder of the introduction we fix a homogeneous $k$-subalgebra $R$ of  $\HH^*(A/k)$ that is finitely generated as a $k$-algebra.  For simplicity of exposition we assume  that $R^0=k$. This is not a great loss of generality for given any $R$ as above, we can drop down to the subring $k\oplus R^{\ges 1}$ without sacrificing the finite generation. A natural choice for $R$ is $k\oplus \HH^{\geqslant 1}(A/k)$ but, for example, when $A$ is a Hopf algebra, like the group algebra of a finite group, or group scheme, it is more natural to take $R=\Ext^*_A(k,k)$, the cohomology ring of $A$, for this is functorial in the ring argument, whilst the Hochschild cohomology is not.

Fix a homogeneous prime ideal $\fp$ of $R$. Let $\tors_{\fp}(\cat D)$ be the triangulated category obtained from $\cat D$ by localising the graded morphisms at $\fp$ and then taking the full triangulated subcategory of objects such that the graded endomorphisms are $\fp$-torsion. By construction $\tors_{\fp}(\cat D)$ is an $R_\fp$-linear  category, where $R_{\fp}$ denotes the homogenous localisation of $R$ at $\fp$. 
The Nakayama functor induces an equivalence 
\[
\nu_\fp\colon\tors_{\fp}(\cat D)\xra{\sim} \tors_{\fp}(\cat D)\,.
\]

Let $\Proj R$ denote the homogenous prime ideals in $R$ not containing $R^{\geqslant 1}$, the unique maximal homogenous ideal of $R$.  For $\fp\in \Proj R$ the injective hull of the graded $R$-module $R/\fp$ is denoted  $I(\fp)$. Our local Serre duality statement reads:

\begin{theorem}
\label{thm:main1} 
 Let $\fp$ be in $\Proj R$ and  let $d$ be the Krull dimension of $R/\fp$.  Then $ \Sigma^{-d}\circ \nu_\fp$ is a Serre functor for $\tors_{\fp}(\cat D)$, in that,  for all $X,Y$ in $\tors_{\fp}(\cat D)$ there are natural isomorphisms
\[
  \Hom_{R_{\fp}}(\Hom^{*}_{\tors_{\fp}(\cat D)}(X,Y),I(\fp))\cong\Hom_{\tors_{\fp}(\cat D)}(Y,\Sigma^{-d}\nu_\fp (X))\,.
\] 
\end{theorem}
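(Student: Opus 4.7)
The strategy is to derive Theorem \ref{thm:main1} by combining Happel's Serre duality on perfect complexes with graded Matlis-Grothendieck local duality for the Noetherian graded-commutative $k$-algebra $R$ at the prime $\fp$. The first ingredient converts $\Hom$-pairs in $\cat D$ into their $k$-linear duals via the Nakayama functor, and the second converts $k$-duals into $I(\fp)$-duals after passing to $\fp$-torsion and $\fp$-local modules.

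For the first ingredient, Happel's isomorphism, assembled over all shifts, yields a natural isomorphism of graded $R$-modules
\[
\Hom_k(\Hom^*_{\cat D}(X,Y),k)\cong\Hom^*_{\cat D}(Y,\nu X)
\]
for $X$ a perfect complex and $Y\in\cat D$ arbitrary. Here the $R$-module structure on both sides arises from the canonical central action on $\cat D$, which is compatible with $\nu$ because elements of $\HH^*(A/k)$ act as natural transformations of the identity functor and so commute with every endofunctor of $\cat D$. For the second ingredient, the graded version of Grothendieck-Matlis local duality for $R$ provides, for every graded $R$-module $M$, a natural isomorphism
\[
\gam_\fp\Hom_k(M,k)\cong\Sigma^d\Hom_{R_\fp}(M_\fp,I(\fp)),
\]
where $d=\dim R/\fp$ and $\gam_\fp$ denotes the composite $\fp$-torsion and $\fp$-localization functor on graded $R$-modules. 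This reduces essentially to the identification $\gam_\fp\Hom_k(R,k)\cong\Sigma^d I(\fp)$, extended by naturality.

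Granting these, the theorem follows by concatenation. For $X,Y\in\tors_\fp(\cat D)$ the graded morphism module $\Hom^*_{\tors_\fp(\cat D)}(X,Y)$ equals $(\Hom^*_{\cat D}(X,Y))_\fp$ and is $\fp$-torsion, so $\gam_\fp$ acts as the identity on it. Hence
\begin{align*}
\Hom^*_{\tors_\fp(\cat D)}(Y,\Sigma^{-d}\nu_\fp X)
 & \cong \Sigma^{-d}\gam_\fp\Hom^*_{\cat D}(Y,\nu X)\\
 & \cong \Sigma^{-d}\gam_\fp\Hom_k(\Hom^*_{\cat D}(X,Y),k)\\
 & \cong \Hom_{R_\fp}(\Hom^*_{\cat D}(X,Y)_\fp,I(\fp))\\
 & =\Hom_{R_\fp}(\Hom^*_{\tors_\fp(\cat D)}(X,Y),I(\fp)),
\end{align*}
using in turn the definition of morphisms in $\tors_\fp(\cat D)$, Happel's duality, and graded local duality.

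The principal obstacle is that Happel's isomorphism is established only for $X$ perfect, whereas objects of $\tors_\fp(\cat D)$ need not be perfect when $A$ has infinite global dimension. The plan to overcome this is a thick-subcategory devissage. Both sides of the desired formula are cohomological functors of $X$, so it suffices to produce the isomorphism on a class of objects thick-generating $\tors_\fp(\cat D)$. The Benson-Iyengar-Krause calculus of local cohomology functors identifies such a class: $\tors_\fp(\cat D)$ is thick-generated by objects of the form $\gam_\fp C$ with $C$ perfect. For $X=\gam_\fp C$ the identity $\nu_\fp X\cong\gam_\fp\nu C$ (valid because $\nu$ commutes with the $R$-linear structure, hence with $\gam_\fp$) together with the adjunction properties of $\gam_\fp$ reduces every Hom-computation to one featuring the perfect complex $C$, where Happel's duality applies directly. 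Verifying $R$-linear naturality carefully through this reduction is the delicate technical point of the proof.
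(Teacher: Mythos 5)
Your proof has a genuine gap, and it is located exactly where the real work of the theorem lies. The ``second ingredient'' is false as stated: for a connected graded $k$-algebra $R$ the graded dual $\Hom_k(M,k)$ is Matlis duality at the \emph{maximal} ideal $\fm:=R^{\geqslant 1}$ (indeed $\Hom_k(R,k)\cong I(\fm)$), and this module is supported only at $\fm$. Hence for any $\fp\in\Proj R$ its $\fp$-localisation vanishes, so $\gam_\fp\Hom_k(R,k)=0$, whereas $\Sigma^{d}\Hom_{R_\fp}(R_\fp,I(\fp))\cong \Sigma^d I(\fp)\neq 0$; already the ``identification'' you want to extend by naturality is wrong, and taking $M=R$ over $R=k[x]$, $\fp=(0)$, gives an explicit counterexample to the general formula. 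There is no module-level local duality converting $k$-duals into $I(\fp)$-duals with a single shift by $d=\dim R/\fp$; producing that shift is precisely the hard content of the theorem. In the paper it comes from an entirely different mechanism: Brown representability defines the object $T_\fp(X)=T(X,I(\fp))$ in the compactly generated category $\uGProj A$, a generic field extension $K/k$ (Construction~\ref{con:generic}) produces a closed point $\fm\in\Proj R_K$ over $\fp$, Koszul objects on $d-1$ elements account for the degree shift, Theorem~\ref{thm:realisability} identifies $\gam_\fp$ with restriction of a Koszul object over $A_K$, and at the closed point one invokes Auslander--Reiten/Serre duality (Proposition~\ref{pr:serre}), yielding Theorem~\ref{thm:gorenstein}; the statement is then transported to $\cat D$ via Buchweitz's equivalence and Lemma~\ref{lem:Tate-local}.

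The devissage you propose to repair the restriction to perfect complexes also fails. For $\fp\in\Proj R$ every perfect complex $C$ becomes zero in $\tors_{\fp}(\cat D)$: its graded endomorphism ring is finite dimensional, hence $R^{\geqslant 1}$-torsion, and is killed by localisation at $\fp$ (compare Remark~\ref{rem:fgc}, where the perfect complexes are identified with $\tors_{\fm}(\cat D)$ at the maximal ideal). Moreover $\gam_\fp C$ is not an object of $\cat D=\dbcat{A}$ at all, and the Benson--Iyengar--Krause generation statement says that the $\fp$-local $\fp$-torsion objects are generated by $(\kos{C}{\fp})_\fp$ with $C$ ranging over \emph{all} of $\cat D$, not over perfect complexes; these generators are not perfect, so Happel duality never applies to them. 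Finally, even with a correct generating class, a Serre-duality isomorphism cannot be established by exhibiting unrelated isomorphisms object-by-object on generators: one needs a globally defined natural transformation first and only then a thick-subcategory argument that it is invertible. So the plan of deducing Theorem~\ref{thm:main1} from Happel duality plus a graded local duality for $R$ cannot be carried through; the theorem genuinely lives on the singularity category (equivalently $\uGproj A$), which is invisible to Happel's duality on perfect complexes.
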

This result is proved towards the end of Section~\ref{sec:Gorenstein} from a more general statement concerning the category $\GProj A$ of (possibly infinite dimensional) Gorenstein projective modules. These are $A$-modules $M$ with the property that $\Ext_A^i(M,P)=0$ for $i\ge 1$ and projective $A$-module $P$. The connection to $\cat D$ is through its subcategory, $\Gproj A$, consisting of finitely dimensional modules. These are  precisely the maximal Cohen-Macaulay $A$-modules, in the terminology of Buchweitz~\cite{Buchweitz:1987}. 

The stable category, $\uGProj A$, of $\GProj A$ is a compactly generated triangulated category, with compact objects equivalent to $\uGproj A$, the stabilisation of $\Gproj A$. Buchweitz~\cite{Buchweitz:1987} proved that there is a equivalence of triangulated categories
\[
\uGproj A\stackrel{\sim}\lra \sing A:= \dbcat{A}/{\cat D}^{\mathrm b}(\proj A)
\]
where $\sing A$ is the \emph{singularity category}, also known as the \emph{stable derived category}, of $A$. There is a natural $R$-action on $\uGProj A$ and the equivalence above is compatible with the induced $R$-actions. For each prime ideal $\fp$ in $\Proj R$ the canonical functors induce equivalences of triangulated categories
\[
\gamma_{\fp}(\uGproj A)\xra{\ \sim\ } \gamma_\fp(\cat{D}) \xra{\ \sim\ } \gamma_{\fp}(\sing A)\
\]
compatible with $R_\fp$-actions, by Lemma~\ref{lem:Tate-local}.  Thus to prove Theorem~\ref{thm:main1} it suffices to prove the corresponding statement for $\uGproj A$; equivalently, for the singularity category of $A$. This also explains the title of this paper.

To that end we consider the subcategory $\gam_{\fp}(\uGProj A)$ of $\uGProj A$ consisting of the $\fp$-local $\fp$-torsion modules. These are the Gorenstein projective $A$-modules $M$ with the property that for each finite dimensional $A$-module $C$, every element of $\uHom^*_{A}(C,M)$, the graded $R$-module of morphisms in $\uGProj A$, is annihilated by some power of $\fp$, and the natural map $\uHom^*_{A}(C,M)\to \uHom^*(C,M)_\fp$ is bijective. Then $\gam_{\fp}(\uGProj A)$  is also a compactly generated triangulated category  and the full subcategory of compact objects is equivalent, up to direct summands, to $\tors_{\fp}(\uGproj A)$. There is an idempotent functor $\gam_\fp\colon \uGProj A\to \uGProj A$ with image the $\fp$-local $\fp$-torsion modules; see Section~\ref{sec:cohomology-and-localisation} for details. The central result of this work is  a local duality theorem  for this category:

\begin{theorem}
\label{thm:main2} 
 Let $\fp$ be in $\Proj R$ and let $d$ be the Krull  dimension of $R/\fp$. Let $X,Y$ be Gorenstein projective $A$-modules and suppose that $X$ is finite dimensional.  Then there is a natural isomorphism
\[ 
  \Hom_R(\Ext_A^*(X,Y),I(\fp))\cong \uHom_A(Y,\Omega^{d}  \gam_{\fp}\GP\nu(X))\,.
\]
\end{theorem}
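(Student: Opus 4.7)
The plan is to derive Theorem~\ref{thm:main2} from two inputs: Happel's Serre duality on $\cat D$ for perfect complexes (transported to $\uGProj A$ via Buchweitz's equivalence), and the general local cohomology formalism of \bik, applied to the $R$-linear compactly generated triangulated category $\uGProj A$. The shift by $\Omega^{d}$ will emerge from the Krull dimension of $R/\fp$, via the length of the graded injective resolution of $I(\fp)$ over $R_{\fp}$.

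I would begin by establishing a global (unlocalised) form of the duality: for $X \in \uGproj A$ and $Y \in \uGProj A$, a natural, $R$-linear isomorphism
\[
\Hom_{k}(\Ext_{A}^{*}(X,Y), k) \cong \uHom_{A}(Y, \GP\nu(X))\,.
\]
Modulo the difference between ordinary and stabilised $\Ext$ in non-positive degrees, which is irrelevant here because $I(\fp)$ is supported away from that range when $\fp \neq R^{\ges 1}$, this is Happel's isomorphism transported to $\uGProj A$: the Nakayama functor on $\cat D$, which is a Serre functor on perfect complexes, induces under the Buchweitz equivalence $\uGproj A \simeq \sing A$ a Serre functor on $\uGproj A$, namely the Gorenstein-projective approximation $\GP\nu$ of $\nu$. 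This approximation is well-defined because $A$ is Gorenstein, and it agrees with $\nu$ in the singularity category.

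Next I would invoke local duality in the sense of \bik. Centrality of the $R$-action on $\uGProj A$ implies that the idempotent functor $\gam_{\fp}$ commutes with $\GP\nu$, and that $\gam_{\fp}(\uGProj A)$ is itself a compactly generated $R_{\fp}$-linear triangulated category. The graded version of Matlis duality identifies $\Hom_{R}(-, I(\fp))$ with a shift of the $k$-dual of the $\fp$-local torsion part: for any graded $R$-module $M$ there is a natural isomorphism
\[
\Hom_{R}(M, I(\fp)) \cong \Hom_{k}(\gam_{\fp} M, k) \quad \text{shifted by } d,
\]
where the shift by $d = \dim R/\fp$ encodes the length of a minimal graded injective resolution of $I(\fp)$ over $R_{\fp}$. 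Substituting $M = \Ext_{A}^{*}(X, Y)$, combining with the global duality from the first step to convert $\gam_{\fp}$ on the module side into $\gam_{\fp}$ acting on $\GP\nu(X)$, and absorbing the Matlis shift into $\Omega^{d} = \Sigma^{-d}$, yields the claimed isomorphism.

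The main obstacle is calibrating the shift by $d$ precisely: one must track, through the \bik{} formalism, exactly how the injective-resolution length of $I(\fp)$ over $R_{\fp}$ interacts with the triangulated shift on the category side, so that the correct power of $\Omega$ appears. A secondary technical point is to verify that $\GP\nu$ commutes with the idempotent functor $\gam_{\fp}$, which ultimately follows from $R$-linearity of the Nakayama equivalence, a consequence of the functoriality of the Hochschild action $\HH^{*}(A/k) \to \Ext^{*}_{A}(X, X)$.
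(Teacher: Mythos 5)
There are two genuine gaps here, and they sit exactly at the two load-bearing points of your outline. First, your global input is not available in the form you use it. Happel's duality $\Hom_k(\Hom_{\cat D}(X,Y),k)\cong\Hom_{\cat D}(Y,\nu X)$ holds for \emph{perfect} complexes, and the perfect complexes are precisely the objects killed in passing to $\sing A\simeq\uGproj A$; it does not transport along the Buchweitz equivalence to a Serre duality on the stable category (the paper in fact places Happel's statement at the excluded ``irrelevant'' point $\fr=R^{\ges 1}$, see Remark~\ref{rem:fgc}). The global duality one actually needs on $\uGproj A$ is Auslander--Reiten duality: $D\uHom_A(X,Y)\cong\uHom_A(Y,\Omega^{-1}\GP(D\Tr X))$, combined with $\Si^2(D\Tr X)\cong\nu X$ to identify the Serre functor as $\Omega\comp\GP\comp\nu$ (Proposition~\ref{pr:serre} and Theorem~\ref{th:serre}). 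Note also that your unlocalised isomorphism is not well formed as stated: the left-hand side is the graded $k$-dual of all of $\Ext^*_A(X,Y)$ while the right-hand side is a single stable Hom group, and the syzygy shift coming from $D\Tr$ is missing.

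Second, the localisation step rests on a false ``graded Matlis duality''. The claimed isomorphism $\Hom_R(M,I(\fp))\cong\Hom_k(\gam_\fp M,k)$ shifted by $d$ is not a theorem: $I(\fp)$ is injective, so its injective resolution has length zero, and duality with respect to $I(\fp)$ at a non-closed prime involves the residue field $k(\fp)$, not $k$, and produces no shift. In the paper the shift by $d$ (equivalently $\Omega^{-d+1}$ relative to $F=\Omega\GP\nu$) is generated by an entirely different mechanism: Brown representability gives the object $T_\fp(X)$ representing $\Hom_R(\Ext^*_A(X,-),I(\fp))$; the statement is proved first at a \emph{closed} point, where Serre duality plus duality over $R_\fm$ suffices with no shift at all; and a general $\fp$ is reduced to a closed point $\fm\in\Proj R_K$ over the extension $K=k(t_1,\dots,t_{d-1})$ via Construction~\ref{con:generic} and the realisability isomorphism $\gam_\fp X\cong\gam_\fm(\kos{X_K}{\bsb})\da_A$ of Theorem~\ref{thm:realisability}. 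The $d-1$ Koszul elements $\bsb$, together with the periodicity isomorphism for $\fp$-local objects, are exactly what produces the power of $\Omega$. None of this machinery (base change, Koszul objects, reduction to closed points) appears in your outline, and the ``main obstacle'' you flag---calibrating the shift---is precisely the content that the mechanism you propose cannot supply.
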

Here $\GP$ is the Gorenstein projective approximation functor; see Section~\ref{sec:GorAlgebras} for details. The theorem above is contained in Theorem~\ref{thm:gorenstein}.

Theorems~\ref{thm:main1} and \ref{thm:main2} are formulated in terms of an arbitrary (but fixed) subalgebra $R$ of the Hochschild cohomology of $A$. It is thus natural to ask how these results are related as we vary $R$. This point is addressed in Remark~\ref{rem:varyR}. Another issue is what transpires in Theorem~\ref{thm:main1}  if we set $\fp=R^{\geqslant 1}$. When the ring $R$ is such that, in addition to being noetherian, the $R$-module $\Ext^*_A(X,X)$ is finitely generated for each $X\in \cat D$, the subcategory $\gam_\fm(\cat D)$ is  precisely the subcategory of perfect complexes and the analogue of Theorem~\ref{thm:main1} is Happel's duality; see Remark~\ref{rem:fgc}.

 The duality statements above are modeled on, and extensions of, analogous results for representation of modules over finite group schemes established in \cite{Benson/Iyengar/Krause/Pevtsova:2019a}. In that context, the stable category of (finite dimensional) Gorenstein projectives is the stable category of (finite dimensional) representations. We refer to that work for antecedents of these results and for applications, notably,  the existence of AR triangles in $\gamma_{\fp}(\cat {D})$. The proof of the results in \emph{op.~cit.} exploited  the tensor structure on the module categories in question, but in fact the arguments can be readily adapted to deal with the general case, as we do here. In doing so, it became clear that local duality is a feature of Gorenstein algebras in general.

\section{Gorenstein algebras}
\label{sec:GorAlgebras}
Throughout this work $k$ will be a field and $A$ a finite dimensional $k$-algebra that is  Gorenstein  (also known as Iwanaga-Gorenstein): the injective dimension of $A$ as a left $A$-module and as a right $A$-module is finite. In this case, the injective dimensions are the same; this  was proved by Zaks~\cite{Zaks:1969}. Evidently when $A$ is Gorenstein so is $A^\op$, the opposite algebra of $A$. 

In this section we recall basic notions and results from the homological theory of Gorenstein algebras, mainly pertaining to duality. To begin with we note that the Gorenstein condition on $A$ is equivalent to: An $A$-module has finite projective dimension if and only if it has finite injective dimension; see, for example, \cite[Lemma~5.1.1]{Buchweitz:1987}. This will be used often in the sequel.

Let $\Mod A$ be the category all (left) $A$-modules and $\mod A$ its full subcategory consisting of finitely generated $A$-modules. The full subcategory of $\mod A$ consisting of projective $A$-modules is denoted  $\proj A$. The injective analogue is denoted $\inj A$. In what follows for $A$-modules $M,N$  we set
\begin{alignat*}{2}
&\uHom_A(M,N)&&:=\Hom_A(M,N)/\{\phi\mid \phi\text{ factors through a projective}\}\\
&\oHom_A(M,N)&&:=\Hom_A(M,N)/\{\phi\mid \phi\text{ factors through an injective}\}.
 \end{alignat*}
When $M$ and $N$ are finitely generated, it suffices to consider maps $\phi$ that factor through $\proj A$ and $\inj A$, respectively.

In the sequel a \emph{duality} between categories will mean a contravariant equivalence.

\subsection*{Vector space duality} For any $A$-module $M$ we set 
\[
DM:=\Hom_k(M,k)\,,
\] 
viewed as an $A^\op$-module.  The assignment $M\mapsto DM$  induces a duality 
\[
\mod A\xra{\ \sim\ }\mod A^\op\,.
\]
which restricts to  a duality $\proj A \xra{\sim} \inj{A^\op}$. The functor $D$ also extends to a duality between the corresponding bounded derived categories:
\begin{equation}
\label{eq:Dduality-Db}
D\colon {\dbcat A} \xra{\ \sim\ } \dbcat{A^\op}\,.
\end{equation}
Let $\cat{D}^{\mathrm{b}}(\proj A)$ denote the bounded derived category of $\proj A$. We  identify it with the subcategory of $\dbcat A$ consisting of perfect complexes. The duality above restricts to $\cat{D}^{\mathrm{b}}(\proj A)\xra{\sim}\cat{D}^{\mathrm{b}}(\proj A^\op)$, since $A$ is Gorenstein. 

\subsection*{The singularity category}
Buchweitz~\cite{Buchweitz:1987} introduced the \emph{stable derived category} of $A$ as the Verdier quotient
\[
\sing A:=\dbcat{A}/\cat{D}^{\mathrm{b}}(\proj A)\,.
\]
This category was rediscovered by Orlov~\cite{Orlov:2004}, who called it the \emph{singularity category} of $A$, and our notation reflects this terminology. When the global dimension of $A$ is finite one has $\sing A=0$, so the singularity category is one measure of the deviation of $A$ from finite global dimension.
 
The duality \eqref{eq:Dduality-Db} induces a duality 
\begin{equation}
\label{eq:D-duality}
D\colon \sing A \xra{\ \sim\ } \sing{A^{\op}}\,.
\end{equation}

The singularity category can be realized (in more than one way) as a stabilisation of a subcategory of $\mod A$. This is described next.

\subsection*{Gorenstein projective and Gorenstein injective modules} 
An $A$-module $X$ is \emph{Gorenstein projective} if
\begin{flalign*}
&&\Ext_A^i(X,P)=0\quad&\text{for each projective $P$ and each $i\ge 1$.}
\end{flalign*}
We write $\GProj A$ for the category of Gorenstein projective $A$-modules and set
\[
\Gproj A:= \GProj A\cap\mod A\,.
\]
These are the maximal Cohen-Macaulay $A$-modules, in Buchweitz's terminology.

Standard arguments (following, for example,
\cite[Section~9]{Happel:1987}) yield that $\GProj A$ is a Frobenius
exact category, with the projective $A$-modules the projective and
injective objects in this category; this requires the hypothesis that
$A$ is Gorenstein. We write $\uGProj A$ for the corresponding stable
category; it is a triangulated category. Its thick subcategory
consisting of the finitely generated modules is denoted $\uGproj A$,
for it can be realised as a stabilisation of $\Gproj A$.

 On $\Gproj A$ the syzygy functor $\Omega$ has an inverse, denoted $\Omega^{-1}$, that is well-defined up to projective summands.  This is the translation on $\uGProj A$. 

An $A$-module $Y$ is \emph{Gorenstein injective} if
\begin{flalign*}
&&\Ext_A^i(Q,Y)=0\quad&\text{for each injective $Q$ and each $i\ge 1$.}
\end{flalign*}
We write $\GInj A$ for the category of Gorenstein injective modules and set $\Ginj A:=\GInj A\cap \mod A$. The stabilisation of $\GInj A$ is denoted $\oGInj A$, and the subcategory of finitely generated modules is $\oGinj A$. These are triangulated categories, where the translation $\Si Y$ of a $Y\in\GInj A$ is the cokernel of an embedding into an injective $A$-module:
\[
0\lra Y \lra I \lra \Si Y \lra 0 
\]

The duality $D\colon \mod A \xra{\sim} \mod {A^\op}$ restricts to dualities
\begin{equation}
\label{eq:D-Gthings}
\Gproj A \xra{\ \sim\ } \Ginj A^\op\quad\text{and} \quad \proj A \xra{\ \sim\ } \inj A^\op\,.
\end{equation}
and hence induces a duality of triangulated categories
\begin{equation}
\label{eq:D-sGthings}
D\colon \uGproj A \xra{\ \sim\ } \oGinj A^\op\,.
\end{equation}

Here is the result on realising the singularity category as a stabilisation.

\begin{proposition}
\label{pr:buchweitz}
The inclusions $\Gproj A\to \dbcat{A}$ and  $\Ginj A\to \dbcat{A}$ induce triangle equivalences
\[
p\colon \uGproj A\xra{\sim}\sing A\qquad\text{and}\qquad
q\colon \oGinj  A\xra{\sim}\sing  A\,.
  \]
\end{proposition}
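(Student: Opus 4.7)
The plan is to establish the first equivalence $p\colon\uGproj A\xra{\sim}\sing A$ directly, and then to deduce the second by dualising.

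First I would check that the composite $\Gproj A\hookrightarrow\dbcat A\to\sing A$ annihilates projective modules (they are perfect complexes) and hence factors through the stable category $\uGproj A$. It is a triangle functor because conflations in $\GProj A$ yield exact triangles in $\dbcat A$, and the translation $M[1]$ in $\sing A$ is identified with $\Omega^{-1}M$ via the short exact sequence $0\to M\to P\to\Omega^{-1}M\to 0$ with $P$ projective, which becomes an isomorphism in $\sing A$ since $P$ is perfect. For essential surjectivity, let $X\in\dbcat A$, set $d:=\injdim A$, and take a bounded-above projective resolution of $X$. For every $n$ larger than $d$ plus the amplitude of $X$, the vanishing $\Ext^{\geqslant 1}_A(M,P)=0$ holds for $M:=\Omega^n X$ and any projective $P$ (since every such $P$ has injective dimension at most $d$), so $M\in\Gproj A$; the same truncation exhibits an isomorphism $X\cong M[-n]$ in $\sing A$, hence $p$ is essentially surjective.

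The heart of the argument is fully faithfulness: for $M,N\in\Gproj A$ and $i\in\bbZ$ the natural map
\[
\uHom_A(M,\Omega^{-i}N)\lra \Hom_{\sing A}(M,N[i])
\]
is bijective. A morphism in $\sing A$ is represented by a roof $M\xla{s}Z\to N[i]$ with perfect cone on $s$. The pivotal input is that the vanishing $\Ext^{\geqslant 1}_A(M,P)=0$ for projective $P$ promotes to $\Hom_{\dbcat A}(M,P^\bullet[j])=0$ for any perfect complex $P^\bullet$ and any $j\geqslant 1$; this forces every such roof to be equivalent to one with $Z=M$. Consequently $\Hom_{\sing A}(M,N[i])$ is computed as $\Hom_{\dbcat A}(M,N[i])$ modulo morphisms factoring through a perfect complex. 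For $i\geqslant 1$ no such factorisation exists, and the group is $\Ext_A^i(M,N)\cong\uHom_A(M,\Omega^{-i}N)$ by standard dimension shifting; for $i=0$ one verifies that a map $M\to N$ factors through a perfect complex if and only if it factors through a projective module, using that $M$ and $N$ are Gorenstein projective so any such factorisation can be refined through an honest projective; and the case $i<0$ follows after replacing $N$ by $\Omega^{|i|}N\in\Gproj A$. The main obstacle is precisely this reduction of arbitrary roofs to roofs based at $M$ together with the identification of the two notions of trivial morphism.

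For $q$, I would apply the equivalence just proved to the Gorenstein algebra $A^{\op}$, yielding $p_{A^{\op}}\colon\uGproj A^{\op}\xra{\sim}\sing A^{\op}$, and conjugate by the dualities $D\colon\oGinj A\xra{\sim}\uGproj A^{\op}$ of \eqref{eq:D-sGthings} and $D\colon\sing A^{\op}\xra{\sim}\sing A$ of \eqref{eq:D-duality}. A direct check, unravelling the definitions of these dualities, shows that the resulting triangle equivalence is induced by the inclusion $\Ginj A\hookrightarrow\dbcat A$, as required.
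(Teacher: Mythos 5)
Your treatment of $q$ (conjugating $p$ for $A^{\op}$ by the dualities \eqref{eq:D-sGthings} and \eqref{eq:D-duality}) is exactly what the paper does; for $p$, however, the paper does not reprove anything but simply cites \cite[Theorem~4.4.1]{Buchweitz:1987}, so the substance of your proposal is your direct argument for $p$ --- and its pivotal step is wrong as stated. The claimed vanishing $\Hom_{\dbcat A}(M,P^\bullet[j])=0$ for \emph{every} perfect complex $P^\bullet$ and every $j\geqslant 1$ is false: take $M=A$ and $P^\bullet=A[-j]$, so that $\Hom_{\dbcat A}(M,P^\bullet[j])=\Hom_A(A,A)\neq 0$; more generally $\Hom_A(M,A)\neq 0$ for any nonzero Gorenstein projective $M$. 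What is true is only $\Ext^j_A(M,Q)=0$ for $j\geqslant 1$ and $Q$ a projective \emph{module} placed in degree~$0$, and the passage from this to statements about arbitrary perfect complexes is precisely where the work lies.

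Moreover, even a corrected vanishing does not give your straightening claim. For a roof $M\xla{\ s\ }Z\to N[i]$ with triangle $Z\xra{s}M\xra{g}C$, $C$ perfect, the roof is equivalent to one with apex $M$ exactly when $\id_M$ lifts along $s$, i.e.\ when the \emph{unshifted} morphism $g\in\Hom_{\dbcat A}(M,C)$ vanishes; this group is nonzero in general (again $C=A$), so denominators cannot all be split. Buchweitz's actual argument is finer: one proves that every morphism from a Gorenstein projective module $M$ to a perfect complex factors in $\dbcat A$ through a projective module (an induction over the components of $C$ using $\Ext^{\geqslant 1}_A(M,\proj A)=0$), and then modifies the fraction by this factorisation; the same lemma is also what identifies morphisms factoring through perfect complexes with morphisms factoring through projectives in degree~$0$, a point you likewise assert without argument (as is the injectivity claim ``for $i\geqslant 1$ no such factorisation exists''). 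As written, the heart of full faithfulness is therefore not established. (Two harmless slips: in the essential-surjectivity step the isomorphism in $\sing A$ is $X\cong \Omega^{n}X[n]$ rather than $M[-n]$, and your $\Ext$-vanishing criterion for membership in $\Gproj A$ is legitimate only because the paper adopts it as the definition.) If your aim is to match the paper, it suffices to cite Buchweitz for $p$ and keep your duality argument for $q$; if you want a self-contained proof, the factorisation lemma above must be stated and proved correctly.
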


\begin{proof} 
The first equivalence is Theorem~4.4.1 from \cite{Buchweitz:1987}, and the  second equivalence follows from that statement applied to $A^\op$ and dualities \eqref{eq:D-sGthings}.
\end{proof}

\subsection*{Approximations}
The following result---see \cite[Lemma~5.1.1]{Buchweitz:1987}---is straightforward to verify.
\begin{lemma}
\label{lem:G-properties}
Let $X$ be a Gorenstein projective module.
\begin{enumerate}[\quad\rm(1)]
\item
$\Ext_A^i(X,F)=0$ for any module $F$ of finite projective dimension and $i\ge 1$.
\item
If an $A$-linear map $X\to N$ factors through a module of finite projective dimension, then it factors through a projective module.
\end{enumerate}
The analogous statements for Gorenstein injective modules also hold.\qed
\end{lemma}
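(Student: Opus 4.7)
The plan is a routine dimension shift. For (1), I would induct on $n = \projdim_A F$. The base case $n=0$ holds by the defining vanishing property of Gorenstein projectives. For the inductive step, choose a short exact sequence
\[
0 \lra K \lra P \lra F \lra 0
\]
with $P$ projective, so that $\projdim_A K \le n-1$. In the long exact $\Ext$ sequence obtained by applying $\Hom_A(X,-)$, the Gorenstein projective hypothesis kills $\Ext_A^i(X,P)$ for $i\ge 1$, while the inductive hypothesis kills $\Ext_A^{i+1}(X,K)$ for $i\ge 1$, forcing $\Ext_A^i(X,F)=0$ for all $i\ge 1$.

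For (2), suppose a map factors as $X \xra{\alpha} F \xra{\beta} N$ with $\projdim_A F < \infty$. Pick a short exact sequence $0 \to K \to P \xra{\pi} F \to 0$ with $P$ projective; by (1) applied to $K$, which has finite projective dimension, $\Ext_A^1(X,K)=0$. Hence $\alpha$ lifts along $\pi$ to some $\widetilde\alpha\colon X\to P$, and the composition $X \xra{\widetilde\alpha} P \xra{\beta\comp\pi} N$ agrees with the original map, providing the required factorisation through the projective module $P$.

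The Gorenstein injective analogues are obtained by the dual argument (induct on the injective dimension of $F$ using a short exact sequence $0 \to F \to Q \to C \to 0$ with $Q$ injective, and lift along an injective envelope in (2)), or equivalently by transporting through the duality $D$ of \eqref{eq:D-Gthings}. The only place the Gorenstein hypothesis on $A$ enters is in identifying the classes of modules of finite projective and finite injective dimension, which makes the statements for Gorenstein injectives directly parallel to those for Gorenstein projectives. There is no serious obstacle: the whole argument is standard dimension shifting.
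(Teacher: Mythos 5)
Your dimension-shifting argument is correct, and it is exactly the routine verification the paper has in mind: the paper gives no proof of Lemma~\ref{lem:G-properties}, simply citing \cite[Lemma~5.1.1]{Buchweitz:1987} and calling it straightforward, and your induction on $\projdim F$ plus lifting along $0\to K\to P\to F\to 0$ (with the evident dual for Gorenstein injectives) is the standard way to carry that out.
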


This has the following consequence.

\begin{lemma}
\label{le:uHom=oHom}
If $X$ is Gorenstein projective and $Y$ is Gorenstein injective, then
\[
\uHom_A(X,Y)=\oHom_A(X,Y)\,.
\]
\end{lemma}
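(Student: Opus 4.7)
The plan is to show the two ideals of maps being quotiented out, namely $\{\phi : X\to Y \mid \phi \text{ factors through a projective}\}$ and $\{\phi : X\to Y \mid \phi \text{ factors through an injective}\}$, actually coincide as subgroups of $\Hom_A(X,Y)$. The two quotients then visibly agree.

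For one containment, suppose $\phi\colon X\to Y$ factors through a projective $A$-module $P$. Since $A$ is Gorenstein, $P$ has finite injective dimension, so $\phi$ factors through a module of finite injective dimension. Because $Y$ is Gorenstein injective, the dual of Lemma~\ref{lem:G-properties}(2) (the analogous statement announced in that lemma, applied on the target side) then upgrades this to a factorisation through an injective module. Hence $\phi$ factors through an injective.

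The reverse containment is entirely symmetric: if $\phi$ factors through an injective $I$, then $I$ has finite projective dimension by the Gorenstein hypothesis, so $\phi$ factors through a module of finite projective dimension; since $X$ is Gorenstein projective, Lemma~\ref{lem:G-properties}(2) promotes this to a factorisation through a projective. Combining the two directions identifies the two ideals of morphisms, yielding the desired equality $\uHom_A(X,Y)=\oHom_A(X,Y)$.

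There is no real obstacle here; the proof is essentially a direct application of Lemma~\ref{lem:G-properties} together with the characterisation of the Gorenstein condition noted at the start of Section~\ref{sec:GorAlgebras}, namely that a module has finite projective dimension if and only if it has finite injective dimension. The only subtlety is invoking the ``analogous statement'' for Gorenstein injectives, which the lemma explicitly records.
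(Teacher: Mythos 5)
Your proof is correct and follows essentially the same route as the paper: it combines Lemma~\ref{lem:G-properties}(2) and its Gorenstein-injective analogue with the fact that, over a Gorenstein algebra, the modules of finite projective dimension are exactly those of finite injective dimension. The paper's proof is just a terser phrasing of this same reduction.
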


\begin{proof}
Given Lemma~\ref{lem:G-properties}, one has to verify that a map $f\colon M\to N$ factors through a module of finite projective dimension if and only if it factors through a module of finite injective dimension. This is a tautology as these categories coincide.
\end{proof}

The following result is due to Auslander and Buchweitz, and is the cornerstone of their theory of maximal Cohen-Macaulay approximations.  There is an analogous statement involving Gorenstein injectives.

\begin{proposition}
\label{pr:approximation}
Every finite dimensional $A$-module $M$ fits into  exact sequences 
\[
0\to F_M\to X_M\to M\to 0\quad \text{and}\quad 0\to M\to F^M\to X^M\to 0
\]
where $X_M,X^M$ are in $\Gproj A$ and $F_M,F^M$ have finite projective dimension. For $X\in\Gproj A$ and $F$ of finite projective dimension, these sequences induce  bijections 
\[
\uHom_A(X,X_M)\xra{\sim}\uHom_A(X,M) \quad \text{and}\quad \uHom_A(F^M,F)\xra{\sim}\uHom_A(M,F)\,.
\]
\end{proposition}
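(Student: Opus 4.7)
The plan is to first establish the existence of the two short exact sequences and then derive the bijections as formal consequences of Lemma~\ref{lem:G-properties}.

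For $0\to F_M\to X_M\to M\to 0$, set $d=\injdim A$ and take a projective resolution $\cdots\to P_1\to P_0\to M\to 0$. The input equivalent to $A$ being Gorenstein is that the $d$-th syzygy $K:=\Omega^d M$ is Gorenstein projective: indeed $\Ext^{>d}_A(M,P)=0$ for each projective $P$, which is exactly what is needed to extend a projective resolution of $K$ into a complete resolution. Since $K\in\Gproj A$, iteratively embedding into projectives produces an exact sequence
\[
0\to K\to Q^0\to Q^1\to\cdots\to Q^{d-1}\to X_M\to 0
\]
with $X_M\in\Gproj A$ and each $Q^i$ projective. Amalgamating this coresolution of $K$ with the projective resolution $0\to K\to P_{d-1}\to\cdots\to P_0\to M\to 0$ by iterated pushout along the common map out of $K$ yields the desired short exact sequence; the resulting $F_M$ acquires a finite filtration with subquotients among the $P_i$ and $Q^j$, so $F_M$ has finite projective dimension. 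For $0\to M\to F^M\to X^M\to 0$, embed $M$ into a finite dimensional injective $J$; since $A$ is Gorenstein, $J$ has finite projective dimension. Apply the first construction to $N:=J/M$ to obtain $0\to F_N\to X_N\to N\to 0$, and form the pullback $F^M:=J\times_N X_N$. This yields two short exact sequences
\[
0\to M\to F^M\to X_N\to 0\qquad\text{and}\qquad 0\to F_N\to F^M\to J\to 0,
\]
so setting $X^M:=X_N$ we have $X^M\in\Gproj A$, while $F^M$ is an extension of modules of finite projective dimension and hence has finite projective dimension.

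For the bijections, apply $\Hom_A(X,-)$ to $0\to F_M\to X_M\to M\to 0$. Since $X\in\Gproj A$ and $F_M$ has finite projective dimension, Lemma~\ref{lem:G-properties}(1) gives $\Ext^1_A(X,F_M)=0$, so $\Hom_A(X,X_M)\to\Hom_A(X,M)$ is surjective; this descends to surjectivity on $\uHom$. For injectivity on $\uHom$, suppose $f\col X\to X_M$ becomes zero in $\uHom_A(X,M)$, so the composite $X\to X_M\to M$ factors as $X\to P\to M$ with $P$ projective; lift $P\to M$ to $P\to X_M$ by projectivity of $P$ and subtract, reducing to the case where $f$ factors through $F_M\hookrightarrow X_M$. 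By Lemma~\ref{lem:G-properties}(2), a map from a Gorenstein projective to a module of finite projective dimension factors through a projective, so $f$ does too. The second bijection is dual: applying $\Hom_A(-,F)$ to $0\to M\to F^M\to X^M\to 0$ gives surjectivity from $\Ext^1_A(X^M,F)=0$, and for injectivity a map $g\col F^M\to F$ whose restriction to $M$ factors through a projective $P$ is adjusted by extending $M\to P$ to $F^M\to P$ (using $\Ext^1_A(X^M,P)=0$); the remainder then vanishes on $M$, descends to $X^M\to F$, and again factors through a projective by Lemma~\ref{lem:G-properties}(2).

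The main obstacle is the first construction: passing from the $(d+2)$-term exact resolution through $K$ to a genuine short exact sequence with Gorenstein projective middle term and finite projective-dimension kernel. The iterated pushout above accomplishes this, but verifying exactness and bounding the projective dimension of $F_M$ requires care. Once both sequences are in hand, the bijections are a routine diagram chase using both parts of Lemma~\ref{lem:G-properties}.
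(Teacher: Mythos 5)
Your argument is correct, but note that the paper does not actually prove this proposition: it simply cites Buchweitz (Theorems~5.1.2, 5.1.4) and Auslander--Buchweitz (Theorems~1.8, 2.8). What you have written is, in essence, a self-contained reconstruction of that classical argument: pass to the $d$-th syzygy $K=\Omega^d M$ (Gorenstein projective by dimension shifting, since $\injdim A=d$), splice its projective coresolution with the truncated projective resolution of $M$ to get the approximation sequence, and then obtain the second (hull) sequence by embedding $M$ into an injective of finite projective dimension and pulling back the approximation of the cokernel; the lifting properties of Lemma~\ref{lem:G-properties} then give both bijections exactly as you say. Two points deserve a word of care. First, your splicing step is stated loosely: ``iterated pushout along the common map out of $K$'' should either be the induction on syzygies (given $0\to F\to X\to\Omega^{j}M\to 0$, embed $X$ into a projective with Gorenstein projective cokernel, push out twice, and use $\Ext^1_A(\Gproj A,\proj A)=0$ to see the new middle term is again Gorenstein projective), or equivalently the mapping cone of a comparison map from the coresolution to the resolution, which exists for the same Ext-vanishing reason; also $F_M$ is then built by iterated extensions/cokernels against projectives rather than literally carrying a filtration with projective subquotients, though finiteness of $\projdim F_M$ follows either way. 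Second, you use that a finitely generated Gorenstein projective module embeds into a finitely generated projective with Gorenstein projective cokernel; this is exactly the Frobenius property of $\Gproj A$ that the paper records earlier as standard, so it is available, but it is the one nontrivial input beyond Lemma~\ref{lem:G-properties} and should be acknowledged. With those clarifications your proof is complete and matches the cited sources' approach, which is a reasonable trade: more length than the paper's citation, but no external dependence.
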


\begin{proof} 
This is part of \cite[Theorems~5.1.2, 5.1.4]{Buchweitz:1987}; see also  \cite[ Theorems~1.8, 2.8]{Auslander/Buchweitz:1989}.
\end{proof}

We write $\GP(M):=X_M$ for the Gorenstein projective  approximation of $M$. It follows from the preceding result that  $X_M$ is well-defined in $\uGproj A$.

\begin{lemma}
\label{le:GP}
The Gorenstein projective approximation $\GP$ induces a triangle equivalence $\oGinj A\xra{\sim}\uGproj A$ satisfying  $q\cong p\circ\GP$, where $p,q$ are the functors in Proposition~\ref{pr:buchweitz}.
\end{lemma}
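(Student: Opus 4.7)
The plan is to reduce the statement to the equivalences $p$ and $q$ of Proposition~\ref{pr:buchweitz} by constructing a natural isomorphism $p \circ \GP \cong q$; once this is in place, $\GP$ will automatically be a triangle equivalence because $p$ and $q$ are.

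First I would verify that the assignment $M \mapsto X_M$ extends to a functor $\oGinj A \to \uGproj A$. Given $f \colon M \to M'$ in $\mod A$, the composite $X_M \to M \xrightarrow{f} M'$ lifts along the surjection $X_{M'} \to M'$ from the approximation of $M'$, since the obstruction lies in $\Ext^1_A(X_M, F_{M'})$, which vanishes by Lemma~\ref{lem:G-properties}(1). The lift $\tilde f \colon X_M \to X_{M'}$ is determined up to $\Hom_A(X_M, F_{M'})$, and each such map factors through a projective by Lemma~\ref{lem:G-properties}(2), so $\tilde f$ is well defined in $\uHom_A(X_M, X_{M'})$. This yields a functor $\mod A \to \uGproj A$. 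If $f$ factors through an injective $I$, then $I$ has finite projective dimension by the Gorenstein hypothesis, so the composite $X_M \to M \to I \to M'$ factors through a projective by Lemma~\ref{lem:G-properties}(2). Via the bijection $\uHom_A(X_M, X_{M'}) \xrightarrow{\sim} \uHom_A(X_M, M')$ of Proposition~\ref{pr:approximation}, this forces $\tilde f = 0$ in $\uHom_A(X_M, X_{M'})$; hence $\GP$ descends to a functor $\oGinj A \to \uGproj A$.

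The central step is the natural isomorphism $p \circ \GP \cong q$. The short exact sequence $0 \to F_M \to X_M \to M \to 0$ furnishes a distinguished triangle $F_M \to X_M \to M \to \Sigma F_M$ in $\dbcat A$, and since $F_M$ has finite projective dimension it lies in $\cat{D}^{\mathrm b}(\proj A)$ and becomes zero in $\sing A$. Consequently the map $X_M \to M$ becomes an isomorphism in $\sing A$, producing a comparison $p(\GP(M)) \xrightarrow{\sim} q(M)$. Naturality in $M$ follows from the functoriality of the approximation established in the previous step: any morphism $f \colon M \to M'$ in $\oGinj A$ fits into a morphism of approximation triangles, which after passing to $\sing A$ witnesses the required compatibility.

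Since $p$ and $q$ are triangle equivalences and $p \circ \GP \cong q$, the functor $\GP$ is naturally isomorphic to $p^{-1} \circ q$, and so inherits the structure of a triangle equivalence. The main technical subtlety, rather than a conceptual obstacle, is the verification of naturality of $p \circ \GP \cong q$; this reduces to showing that the ambiguity in choosing the lift $\tilde f$ is absorbed upon quotienting by morphisms factoring through projectives, which is precisely what was arranged in the first step.
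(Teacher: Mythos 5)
Your proof is correct and follows essentially the same route as the paper: the key point in both is that $F_M$ has finite projective dimension, hence vanishes in $\sing A$, so the approximation map $X_M\to M$ gives $p\circ\GP\cong q$, and $\GP$ inherits the triangle equivalence structure from $p$ and $q$. Your additional verification of functoriality and well-definedness of $\GP$ on $\oGinj A$ is detail the paper delegates to Proposition~\ref{pr:approximation}, but it is the same argument.
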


\begin{proof}
Since $pF=0$ for any $A$-module $F$ of finite projective dimension, Proposition~\ref{pr:approximation} implies $qY\cong p\GP(Y)$ for any Gorenstein injective module $Y$. This yields also that $\GP$ is a triangle equivalence, since $p,q$ are triangle equivalences.
\end{proof}

\begin{remark}
\label{rem:not-self-inj}
When $A$ is self-injective the projective and injective $A$-modules
coincide and hence $\GProj A=\GInj A$. Conversely, when the
(projective) module $A$ is Gorenstein injective, $A$ is
self-injective: Consider an exact sequence
\[
0\lra A\lra I\lra \Si A \lra 0
\]
where $I$ is injective. If $A$ is Gorenstein injective, then so is $\Si A$, and hence the sequence above splits, as the injective dimension of $A$ is finite. Thus $A$ is injective.

Example~\ref{ex:GPI} describes a Gorenstein algebra that is not self-injective, and identifies modules that are Gorenstein projective but not Gorenstein injective.
\end{remark}

\subsection*{The Nakayama functor}
This is the functor $\nu\colon \mod A\to\mod A$ that assigns to each $M$ in $\mod A$ the $A$-module
\[
\nu M:=DA\otimes_A M\cong D\Hom_A(M,A)\,.
\] 
This functor is an equivalence if $A$ is self-injective but not in general. However the Gorenstein property of $A$ implies that the  derived Nakayama functor, for which we use the same notation, is  an equivalence:
\[
\nu\colon\dbcat{A}\stackrel{\sim}\lra \dbcat{A}\quad\text{where}\quad M\mapsto DA \lotimes_A M\,.
\]
From this it is not hard to verify that $\nu\colon \mod A\to \mod A$ restricts to equivalences
\[
\begin{tikzcd}
\proj A \ar[d,hookrightarrow] \ar[r, "\nu","\sim"'] & \inj A \ar[d,hookrightarrow] \\
\Gproj A \ar[r, "\nu","\sim"'] & \Ginj A\,. 
\end{tikzcd}
\]
Therefore one gets an induced equivalence $\nu\colon \uGproj A\xra{\sim} \oGinj A$.

On the other hand, since the complexes of finite projective dimension are the same as those of finite injective dimension (because $A$ is Gorenstein), $DA$ is in the thick subcategory of $\dbcat{A^{\op}}$ generated by $A$, and hence, by duality $A$ is in the thick subcategory of $\dbcat A$ generated by $DA$, that is to say, by $\nu A$. It follows that the derived Nakayama functor satisfies
\[
\nu (\bfD^{\mathrm{b}}(\proj A))=\bfD^{\mathrm{b}}(\proj A)\,.
\]
This is the essence of Happel duality. It induces an equivalence $\bar\nu$ on the singularity category that makes the following square commutative
\begin{equation}
\label{eq:GSnu}
\begin{tikzcd}
    \uGproj A \ar[d,"\sim","p"'] \ar[r,"\nu","\sim"'] & \oGinj A \ar[d,"q","\sim"']  \\
    \sing A \ar[r,"\bar\nu","\sim"'] &\sing A\,.
  \end{tikzcd}
\end{equation}
The vertical equivalences are from Proposition~\ref{pr:buchweitz}.

\subsection*{The Auslander transpose}
Let $X$ be a finite dimensional Gorenstein projective $A$-module. Any projective presentation 
\[
P_1\lra P_0\lra X\lra 0
\]
induces an exact sequence of $A^\op$-modules.
\[
0\lra \Hom_A(X,A)\lra\Hom_A(P_0,A)\lra \Hom_A(P_1,A)\lra \Tr X\lra 0\,.
\]
The $A^\op$-module $\Tr X$ is the \emph{Auslander transpose} of $X$, and it depends only on $X$ up to projective summands. It is  a Gorenstein projective $A^\op$-module since it identifies with $\Hom_A(\Omega^2X,A)$. Applying $D$ yields an exact sequence of $A$-modules
\[
0\lra D\Tr X\lra \nu P_1\lra\nu P_0\lra\nu X\lra 0\,.
\]
Since $\Tr X$ is Gorenstein projective over $A^\op$, the $A$-module $D\Tr X$ is Gorenstein injective and the exact sequence above implies that there is an  isomorphism
\begin{equation}
\label{eq:DTr}
\Si^2(D\Tr X)\cong\nu X.
\end{equation}
in $\oGinj A$. 

\subsection*{Auslander-Reiten Duality}
We are ready to state the main results of this section. For similar statements we refer to \cite{Amiot:2008,Auslander/Reiten:1991}.

\begin{proposition}
\label{pr:serre}
Let $A$ be a Gorenstein algebra, and $X,Y$ Gorenstein projective $A$-modules.
When $X$ is finite dimensional,  there  is a natural isomorphism
\[ 
D\uHom_A(X,Y) \cong\uHom_A(Y,\Omega^{-1}\GP(D\Tr X))\,.
\]
\end{proposition}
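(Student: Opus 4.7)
The plan is to establish the isomorphism as a composition
\[
D\uHom_A(X,Y) \;\cong\; \Ext^1_A(Y, D\Tr X) \;\cong\; \uHom_A(Y, \Omega^{-1}\GP(D\Tr X)),
\]
arguing each link separately.

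For the first isomorphism, fix a projective presentation $P_1 \to P_0 \to X \to 0$ by finitely generated projectives. Since $\Hom_A(P_i, Y) \cong P_i^* \otimes_A Y$, applying $\Hom_A(-,Y)$ yields the four-term exact sequence
\[
0 \to \Hom_A(X, Y) \to P_0^* \otimes_A Y \to P_1^* \otimes_A Y \to \Tr X \otimes_A Y \to 0
\]
of $k$-vector spaces. Dualising via $D$ and the natural identification $D(P_i^* \otimes_A Y) \cong \Hom_A(Y, \nu P_i)$ produces the exact sequence
\[
0 \to \Hom_A(Y, D\Tr X) \to \Hom_A(Y, \nu P_1) \to \Hom_A(Y, \nu P_0) \to D\Hom_A(X, Y) \to 0.
\]
Separately, one has $\uHom_A(X,Y) = \Coker\bigl(X^*\otimes_A Y \to \Hom_A(X,Y)\bigr)$, since the image of the evaluation map consists precisely of the maps factoring through a (finitely generated) projective module. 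Dualising this gives $D\uHom_A(X,Y) = \Ker\bigl(D\Hom_A(X,Y) \to \Hom_A(Y, \nu X)\bigr)$. Now splice the 4-term sequence $0 \to D\Tr X \to \nu P_1 \to \nu P_0 \to \nu X \to 0$ into two short exact sequences and use $\Ext^{\geq 1}_A(Y, \nu P_i) = 0$ (since $Y$ is Gorenstein projective and $\nu P_i$ is injective); a diagram chase through the resulting long exact sequences identifies the kernel with $\Ext^1_A(Y, D\Tr X)$.

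For the second isomorphism, choose an injective embedding $D\Tr X \hookrightarrow J$ and let $K = J/D\Tr X$, which is Gorenstein injective and represents $\Sigma(D\Tr X)$ in $\oGinj A$. The long exact $\Ext^*_A(Y,-)$ sequence combined with $\Ext^1_A(Y, J) = 0$ gives
\[
\Ext^1_A(Y, D\Tr X) \cong \Coker\bigl(\Hom_A(Y, J) \to \Hom_A(Y, K)\bigr).
\]
Because $D\Tr X$ is Gorenstein injective, $\Ext^1_A(Q, D\Tr X) = 0$ for every injective $Q$, so every map $Q \to K$ lifts along $J \to K$. Consequently the image of $\Hom_A(Y,J) \to \Hom_A(Y, K)$ is precisely the subspace of maps $Y \to K$ that factor through an injective module, and the cokernel equals $\oHom_A(Y, K)$. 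By Lemma~\ref{le:uHom=oHom} this equals $\uHom_A(Y, K)$. Proposition~\ref{pr:approximation}, extended to arbitrary $Y \in \GProj A$ using the infinite-dimensional analogue of Lemma~\ref{lem:G-properties}, then gives $\uHom_A(Y, K) \cong \uHom_A(Y, \GP(K))$. Finally, the triangle equivalence $\GP\colon \oGinj A \to \uGproj A$ of Lemma~\ref{le:GP} intertwines $\Sigma$ with $\Omega^{-1}$, so $\GP(K) \cong \GP(\Sigma D\Tr X) \cong \Omega^{-1}\GP(D\Tr X)$ in $\uGproj A$.

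The main obstacle is the bookkeeping in the diagram chase of the first step, together with verifying that the extended approximation bijection of Proposition~\ref{pr:approximation} for $Y \in \GProj A$ of arbitrary dimension is compatible with the natural maps used; the key technical ingredient throughout is the vanishing of $\Ext^{\geq 1}_A(Y,-)$ on modules of finite projective (equivalently, finite injective) dimension, which holds for Gorenstein projective $Y$ in both the finite- and infinite-dimensional cases.
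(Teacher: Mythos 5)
Your argument is correct, but it takes a genuinely different route from the paper's. The paper's proof is a short chain: it first rewrites $D\uHom_A(X,Y)\cong D\Ext^1_A(X,\Omega Y)$ using a syzygy sequence for $Y$, then invokes Auslander--Reiten duality in the form $D\Ext^1_A(X,Z)\cong\oHom_A(Z,D\Tr X)$ (citing Auslander's Proposition I.3.4) with $Z=\Omega Y$, and finally passes from $\oHom$ to $\uHom$ via Lemma~\ref{le:uHom=oHom}, from $D\Tr X$ to $\GP(D\Tr X)$ via Proposition~\ref{pr:approximation}, and shifts by $\Omega^{-1}$ using that $\Omega$ is an equivalence on $\uGproj A$. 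You instead prove the duality step by hand: dualising the four-term sequence coming from a projective presentation of $X$ and comparing with the dual of the evaluation map $\Hom_A(X,A)\otimes_AY\to\Hom_A(X,Y)$ gives $D\uHom_A(X,Y)\cong\Ext^1_A(Y,D\Tr X)$, where the essential input is $\Ext^{\ge 1}_A(Y,\nu P_i)=0$ for $Y$ Gorenstein projective; you then convert $\Ext^1_A(Y,D\Tr X)$ into $\uHom_A(Y,\Si(D\Tr X))$ by a cosyzygy argument (the lifting of maps from injectives into $K$ is exactly right, since $D\Tr X$ is Gorenstein injective) before applying the same two lemmas and the fact that $\GP$ is a triangle equivalence. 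What your route buys is self-containedness: you avoid the external reference to Auslander's functor-category result, at the cost of the diagram chase, which does go through because the dualised four-term sequence is compatible with the dual of the evaluation map by naturality of evaluation over the presentation $P_1\to P_0\to X$. Note also that your extension of Proposition~\ref{pr:approximation} to an arbitrary Gorenstein projective module in the contravariant variable is not an extra burden: the paper's own proof needs the same extension (applied to $\Omega Y$), and it follows from Lemma~\ref{lem:G-properties} exactly as you indicate.
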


\begin{proof}
The exact sequence $0\to \Omega Y\to P\to Y\to 0$ with $P$ projective, induces the first of the following isomorphisms
\begin{align*}
  D\uHom_A(X,Y)&\cong  D\Ext^1_A(X, \Omega Y)\\
               &\cong \oHom_A(\Omega Y,D\Tr X)\\
               &\cong  \uHom_A(\Omega Y,D\Tr X)\\
               &\cong\uHom_A(\Omega Y,\GP(D\Tr X))\\
               &\cong\uHom_A(Y,\Omega^{-1}\GP(D\Tr X))\,.
\end{align*}
The second isomorphism is Auslander-Reiten duality \cite[Proposition I.3.4]{Auslander:1978a}, whilst  the third isomorphism is from Lemma~\ref{le:uHom=oHom}, since $\Omega Y$ is Gorenstein projective and $D\Tr X$ is Gorenstein injective. The fourth  isomorphism follows from Proposition~\ref{pr:approximation}, and the final one is clear since $\Omega$ is an equivalence on $\uGproj A$.
\end{proof}

As in Bondal and Kapranov \cite[\S3]{Bondal/Kapranov:1990a}, a \emph{Serre functor}  on  a $k$-linear, Hom-finite, additive category $\sfC$ is an equivalence $F\colon\sfC\to\sfC$ along with natural isomorphisms
\[
D\Hom_\sfC(X,Y)\cong\Hom_\sfC(Y,FX)
\] 
for all objects $X,Y$ in $\sfC$.

\begin{theorem}
\label{th:serre}
Let $A$ be a Gorenstein algebra. Then
\[
{\Omega\circ\GP}\circ\nu\colon \uGproj A\xra{\sim}\uGproj A \quad\text{and}\quad 
\Si^{-1}\circ \bar\nu\colon \sing A\xra{\sim} \sing A
\] 
are Serre functors.
\end{theorem}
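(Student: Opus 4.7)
The plan is to derive the theorem directly from Proposition~\ref{pr:serre} by identifying the functor $\Omega^{-1}\circ\GP\circ D\Tr$ appearing there with the more symmetric expression $\Omega\circ\GP\circ\nu$, and then transporting the result to $\sing A$ via Buchweitz's equivalence $p$.

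First, I would check that $\Omega\circ\GP\circ\nu$ is an equivalence on $\uGproj A$. The derived Nakayama functor restricts to an equivalence $\nu\colon\uGproj A\xra{\sim}\oGinj A$, Lemma~\ref{le:GP} gives an equivalence $\GP\colon\oGinj A\xra{\sim}\uGproj A$, and $\Omega$ is the inverse of the translation on $\uGproj A$, hence an equivalence. So the composite is an equivalence of triangulated categories.

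Next, the Serre duality isomorphism. Proposition~\ref{pr:serre} provides a natural isomorphism
\[
D\uHom_A(X,Y)\cong\uHom_A(Y,\Omega^{-1}\GP(D\Tr X))
\]
for $X$ finite dimensional Gorenstein projective and $Y$ Gorenstein projective, so it suffices to exhibit a natural isomorphism $\Omega^{-1}\GP(D\Tr X)\cong\Omega\GP(\nu X)$ in $\uGproj A$. The identity~\eqref{eq:DTr} gives $\Si^2(D\Tr X)\cong\nu X$ in $\oGinj A$. Applying the triangle equivalence $\GP\colon\oGinj A\xra{\sim}\uGproj A$ and using that $\GP$ intertwines $\Si$ with $\Omega^{-1}$ (since $p\circ\GP\cong q$ by Lemma~\ref{le:GP}, and both $p$ and $q$ intertwine the respective translations with the shift on $\sing A$), we obtain $\Omega^{-2}\GP(D\Tr X)\cong\GP(\nu X)$, whence $\Omega^{-1}\GP(D\Tr X)\cong\Omega\GP(\nu X)$, as desired. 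Substituting yields the Serre duality isomorphism for $\Omega\circ\GP\circ\nu$ on $\uGproj A$.

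Finally, to transfer this to $\sing A$, I would use the diagram~\eqref{eq:GSnu} and Lemma~\ref{le:GP}: one has natural isomorphisms $p\circ\GP\cong q$ and $q\circ\nu\cong\bar\nu\circ p$, so
\[
p\circ(\Omega\circ\GP\circ\nu)\cong\Si^{-1}\circ p\circ\GP\circ\nu\cong\Si^{-1}\circ q\circ\nu\cong\Si^{-1}\circ\bar\nu\circ p\,.
\]
Since $p$ is a triangle equivalence that induces a $k$-linear bijection on Hom-sets, the Serre property transports: for $\bar X,\bar Y\in\sing A$, choosing preimages $X,Y\in\uGproj A$ with $X$ finite dimensional yields
\[
D\Hom_{\sing A}(\bar X,\bar Y)\cong D\uHom_A(X,Y)\cong\uHom_A(Y,\Omega\GP\nu X)\cong\Hom_{\sing A}(\bar Y,\Si^{-1}\bar\nu\bar X)\,.
\]
This naturality, together with the fact that $\bar\nu$ is an equivalence (diagram~\eqref{eq:GSnu}) and $\Si^{-1}$ is the inverse shift, shows that $\Si^{-1}\circ\bar\nu$ is a Serre functor on $\sing A$.

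The main obstacle is the bookkeeping in the second step: one has to verify that the isomorphism $\Si^2(D\Tr X)\cong\nu X$ (stated in $\oGinj A$) transports under $\GP$ to an isomorphism in $\uGproj A$ that genuinely realises $\GP\circ\Si$ as $\Omega^{-1}\circ\GP$ naturally in $X$, which requires invoking triangulatedness of the equivalences $p$, $q$, and $\GP$ from Proposition~\ref{pr:buchweitz} and Lemma~\ref{le:GP}. Everything else is formal manipulation.
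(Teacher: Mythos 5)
Your proposal is correct and follows essentially the same route as the paper: it invokes Proposition~\ref{pr:serre}, identifies $\Omega^{-1}\GP(D\Tr X)\cong\Omega\GP(\nu X)$ using \eqref{eq:DTr} together with the fact that $\GP$ is a triangle equivalence, and then transports the duality to $\sing A$ via Lemma~\ref{le:GP} and the commutative square \eqref{eq:GSnu}. The only difference is cosmetic: you spell out the equivalence check and the transfer to $\sing A$, which the paper leaves implicit.
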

 
\begin{proof}
For any Gorenstein projective $A$-module $X$, one has a sequence of isomorphisms in $\uGproj A$, where the first and the last one are by construction:
\begin{align*}
\Omega^{-1}\GP(D\Tr X) 
	&= \Si \GP(D\Tr X) \\
	& \cong \GP (\Si D\Tr X) \\
	& \cong \GP (\Si^{-1}\nu X) \\
	& \cong \Si^{-1}(\GP \nu X) \\
	&=\Omega(\GP\nu X)\,.
\end{align*}
The second and the fourth isomorphisms hold because $\GP$ is a triangle functor, and the third one is by \eqref{eq:DTr}. Thus  Proposition~\ref{pr:serre} yields that $\Omega\circ\GP\circ\nu$ is a Serre functor on $\uGProj A$, as claimed.  The description of the Serre functor on $\sing A$ is then a consequence of  Lemma~\ref{le:GP} and \eqref{eq:GSnu}.
\end{proof}

\subsection*{Compact generation}
So far the results have mostly dealt with the categories $\uGproj A$ and $\oGinj A$ consisting of finite dimensional modules. To prove the local duality theorem announced in the introduction we need to work in larger categories, $\uGProj A$ and $\oGInj A$. To this end we recall the following result, which is well-known at least for self-injective algebras.

\begin{proposition}
The stable categories $\uGProj A$ and $\oGInj A$ are compactly generated triangulated categories, and the full subcategories of compact objects identify with $\uGproj A$ and $\oGinj A$, respectively.
\end{proposition}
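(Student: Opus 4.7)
I plan to verify the three conditions for compact generation---existence of arbitrary coproducts, presence of a small set of compact objects, and the property that they generate---and then identify the compacts with $\uGproj A$. The argument is given for $\uGProj A$; the statement for $\oGInj A$ follows by the analogous argument with injectives replacing projectives.

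The easy steps: coproducts of Gorenstein projectives are Gorenstein projective, since $\Ext^i_A(\bigoplus_j X_j,P)\cong\prod_j\Ext^i_A(X_j,P)=0$ for projective $P$ and $i\geq 1$, so $\uGProj A$ inherits set-indexed coproducts from $\Mod A$.  Every $C\in\uGproj A$ is compact: $C$ is finitely presented, so $\Hom_A(C,-)$ preserves coproducts, and any $A$-linear map from a finitely generated module into a projective factors through a finitely generated projective (the image lies in a finitely generated free submodule of any chosen free cover); consequently any factorisation of a map $C\to\bigoplus_j Y_j$ through a projective has image in a finite sub-coproduct, yielding an isomorphism $\bigoplus_j\uHom_A(C,Y_j)\xrightarrow{\sim}\uHom_A(C,\bigoplus_j Y_j)$.

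The crux is that $\uGproj A$ generates.  Suppose $Y\in\GProj A$ satisfies $\uHom_A(C,\Sigma^i Y)=0$ for every $C\in\uGproj A$ and $i\in\bbZ$.  Since $Y$ is Gorenstein projective, this hypothesis is equivalent to $\Ext^i_A(C,Y)=0$ for every $C\in\Gproj A$ and $i\geq 1$.  Set $d:=\injdim A<\infty$.  For any $M\in\mod A$ and projective $P$, the bound $\injdim P\leq d$ gives $\Ext^i_A(\Omega^d M,P)=\Ext^{i+d}_A(M,P)=0$ for $i\geq 1$, whence $\Omega^d M\in\Gproj A$.  By dimension shift, $\Ext^i_A(M,Y)=\Ext^{i-d}_A(\Omega^d M,Y)=0$ for all $i>d$.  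Baer's criterion applied to the $d$th cosyzygy of $Y$, combined with the noetherianity of $A$, then forces $\injdim Y\leq d$.  The Gorenstein hypothesis on $A$ implies $Y$ has finite projective dimension, and Lemma~\ref{lem:G-properties} then forces the Gorenstein projective module $Y$ to be projective, i.e., zero in $\uGProj A$.

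Combining these steps shows $\uGProj A$ is compactly generated with $\uGproj A$ a set of compact generators.  The compact objects form the smallest thick subcategory closed under summands containing $\uGproj A$; since $\uGproj A$ is already thick in $\uGProj A$ and idempotent complete (being Krull-Schmidt), it coincides with the subcategory of compact objects.  The principal obstacle is the generation step: the decisive input is the uniform bound $\injdim P\leq\injdim A$ for every projective $P$, which provides the global control needed to convert Ext-vanishing on $\Gproj A$ into finite injective dimension of $Y$.
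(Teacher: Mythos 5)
Your proposal is correct, but it reaches compact generation by a genuinely different route than the paper. The paper transports the whole question across the Nakayama equivalence $\uGProj A\xrightarrow{\sim}\oGInj A$ (quasi-inverse $\Hom_A(DA,-)$), cites Iyengar--Krause \cite[\S5]{Iyengar/Krause:2006} for compact generation of $\oGInj A$, and then identifies the compacts by a detection argument: a nonzero object $Y$ of $\oGInj A$ is non-injective, so $\oHom_A(M,Y)\neq 0$ for some finite dimensional $M$, and replacing $M$ by its finite dimensional Gorenstein injective approximation $W$ gives $\oHom_A(W,Y)\cong\oHom_A(M,Y)\neq 0$. You instead stay in $\uGProj A$ and prove zero-detection by hand: vanishing of $\uHom_A(C,\Sigma^iY)$ for all $C\in\Gproj A$ gives $\Ext^{\geqslant 1}_A(C,Y)=0$ (positive-degree Tate cohomology agrees with $\Ext$, as in \eqref{eq:ext-tate}; only this direction of your claimed equivalence is needed), then $\Omega^dM\in\Gproj A$ for every $M\in\mod A$ with $d=\injdim A$ forces $\Ext^{>d}_A(M,Y)=0$, Baer gives $\injdim Y\leqslant d$, the Gorenstein property gives finite projective dimension, and Lemma~\ref{lem:G-properties} splits $0\to\Omega Y\to P\to Y\to 0$, so $Y$ is projective. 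This buys a self-contained, elementary argument whose only global input is $\injdim A<\infty$ (note that your step ``$\Omega^dM\in\Gproj A$'' is legitimate precisely because the paper defines Gorenstein projectivity by Ext-vanishing against projectives), whereas the paper's route is shorter and gets the injective-side statement for free; your closing remark that the injective case is ``analogous'' is true but does require checking the dual facts (coproducts of Gorenstein injectives are Gorenstein injective over the finite dimensional algebra $A$, and Gorenstein injective plus finite injective dimension implies injective), which the Nakayama equivalence avoids. Both arguments end the same way, via Neeman's theorem that the compacts are the thick closure of a set of compact generators together with idempotent completeness (Krull--Schmidt) of $\uGproj A$, a point the paper leaves implicit and you spell out.
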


\begin{proof}
The Nakayama functor  induces a triangle equivalence $\uGProj A\xra{\sim}\oGInj A$, identifying $\uGproj A$ with $\oGInj A$; the quasi-inverse is given by $\Hom_A(D(A),-)$. It thus suffices to verify the assertions about $\oGInj A$.

It follows from \cite[\S5]{Iyengar/Krause:2006} that $\oGInj A$ is compactly generated; it remains to identify the compact objects.  If $X$ is a finite dimensional module, then $\oHom_A(X,-)$ preserves direct sums.  Thus every module in $\oGinj A$ is compact in $\oGInj A$. 

On the other hand, for any nonzero module $Y$ in $\oGInj A$ there exists a finite dimensional module $M$ such that $\oHom_A(M,Y)\neq 0$; this is because $Y$ is not injective.  Choose a Gorenstein injective approximation $M\to W$, using the analogue of Proposition~\ref{pr:approximation} for Gorenstein injective modules. Then 
\[
\oHom_A(W,Y)\cong \oHom_A(M,Y)\ne 0\,,
\]
which implies that $\oGinj A$ is all the compact objects of $\oGInj A$.
\end{proof}

\section{Cohomology and localisation}
\label{sec:cohomology-and-localisation} 

In this section we recall basic notions and constructions concerning certain localisation functors on  triangulated categories with ring
actions. The material is needed to state and prove the results in Section~\ref{sec:HH-and-Tate} and \ref{sec:Gorenstein}. The main triangulated category of interest is the stable category of Gorenstein projective modules. Primary references for the material presented here are \cite{Benson/Iyengar/Krause:2008a, Benson/Iyengar/Krause:2011a}.

\subsection*{Triangulated categories with central action} 
Let $\sfT$ be a triangulated category with suspension $\Si$. Given objects $X$ and $Y$ in $\sfT$, consider the graded abelian groups
\[ 
\Hom_\sfT^*(X,Y)=\bigoplus_{i\in\bbZ}\Hom_\sfT(X,\Si^i Y) \quad
\text{and}\quad
\End_{\sfT}^{*}(X)= \Hom_{\sfT}^{*}(X,X)\,.
\] 
Composition makes $\End_{\sfT}^{*}(X)$ a graded ring and $\Hom_{\sfT}^{*}(X,Y)$ a left-$\End_{\sfT}^{*}(Y)$ right-$\End_{\sfT}^{*}(X)$ module.

Let $R$ be a graded-commutative ring. We say the triangulated category $\sfT$ is \emph{$R$-linear} if for each $X$ in $\sfT$ there is a homomorphism of graded rings $\phi_X\colon R\to \End_{\sfT}^{*}(X)$ such that the induced left and right actions of $R$ on $\Hom_{\sfT}^{*}(X,Y)$ are compatible in the following sense: For any $r\in R$ and $\alpha\in\Hom^*_\sfT(X,Y)$, one
has
\[ \phi_Y(r)\alpha=(-1)^{|r||\alpha|}\alpha\phi_X(r)\,.
\]

An exact functor $F\colon \sfT\to\sfU$ between $R$-linear triangulated categories is \emph{$R$-linear} if the induced map
\[ 
\Hom_\sfT^*(X,Y)\lra \Hom_\sfU^*(FX,FY)
\] 
of graded abelian groups is $R$-linear for all objects $X,Y$ in $\sfT$.
 
In what follows, we fix a compactly generated $R$-linear triangulated
category $\sfT$ and write $\sfT^c$ for its full subcategory of compact
objects.

\subsection*{Graded modules} 
In the remainder of this section $R$ will be a graded commutative noetherian ring.  We will only be concerned with homogeneous elements and ideals in $R$. In this spirit, `localisation' will mean homogeneous localisation, and $\Spec R$ will denote the set of homogeneous prime ideals in $R$. 

Given graded $R$-modules $M$ and $N$, we denote by $\Hom_R(M,N)$ the $R$-linear maps $\phi\colon M\to N$ such that
$\phi(M^i)\subseteq N^i$ for all $i\in\bbZ$, and
\[ 
\Hom_R^*(M,N)=\bigoplus_{i\in\bbZ}\Hom_R(M,N[i])
\]
where $N[i]^p=N^{i+p}$ for all $i,p\in\bbZ$.

\subsection*{Localisation} 
Fix an ideal $\fa$ in $R$. An $R$-module $M$ is \emph{$\fa$-torsion} if $M_\fq=0$ for all $\fq$ in $\Spec R$ with $\fa\not\subseteq \fq$. Analogously, an object $X$ in $\sfT$ is \emph{$\fa$-torsion} if the $R$-module $\Hom_\sfT^*(C,X)$ is $\fa$-torsion for all $C\in\sfT^c$.  The full subcategory of $\fa$-torsion objects
\[ 
\gam_{\mcV(\fa)}\sfT:=\{X\in\sfT\mid X \text{ is $\fa$-torsion} \}
\] 
is localising and the inclusion $\gam_{\mcV(\fa)}\sfT\subseteq \sfT$ admits a right adjoint, denoted $\gam_{\mcV(\fa)}$.

Fix a $\fp$ in $\Spec R$.  An $R$-module $M$ is \emph{$\fp$-local} if the localisation map $M\to M_\fp$ is invertible, and an object $X$ in $\sfT$ is \emph{$\fp$-local} if the $R$-module $\Hom_\sfT^*(C,X)$ is $\fp$-local for all $C\in\sfT^c$.  Consider the full subcategory of $\sfT$ of $\fp$-local objects
\[ 
\sfT_\fp:=\{X\in\sfT\mid X \text{ is $\fp$-local}\}
\] 
and the full subcategory of $\fp$-local and $\fp$-torsion objects
\[ 
\gam_\fp\sfT:=\{X\in\sfT\mid X \text{ is $\fp$-local and
$\fp$-torsion} \}.
\] 
Note that $\gam_\fp\sfT\subseteq\sfT_\fp\subseteq\sfT$ are localising subcategories.  The inclusion $\sfT_\fp\to\sfT$ admits a left adjoint $X\mapsto X_\fp$ while the inclusion $\gam_\fp\sfT\to\sfT_\fp$ admits a right adjoint. We denote by $\gam_\fp\colon\sfT\to\gam_\fp\sfT$ the composition of those adjoints;
it is the \emph{local cohomology functor} with respect to $\fp$; see \cite{Benson/Iyengar/Krause:2008a, Benson/Iyengar/Krause:2011a} for explained notions and details.

The functor $\gam_{\mcV(\fa)}$ commutes with exact functors preserving coproducts.

\begin{lemma}
\label{le:gamma-commute} 
Let $F\colon\sfT\to \sfU$ be an exact functor between $R$-linear compactly generated triangulated categories such that $F$ is $R$-linear and preserves coproducts. Suppose that the action of $R$ on $\sfU$ factors through a homomorphism $f\colon R\to S$ of graded commutative rings. For any ideal $\fa$ of $R$ there is a natural isomorphism
\[ 
F\comp\gam_{\mcV(\fa)}\cong \gam_{\mcV(\fa S)} \comp F
\] 
of functors $\sfT\to\sfU$, where $\fa S$ denotes the ideal of $S$ that is generated by $f(\fa)$.
\end{lemma}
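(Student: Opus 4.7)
The strategy is to reduce to the case of a principal ideal, where the torsion functor admits a very concrete model via the stable Koszul/telescope construction, and then to bootstrap up to the general case using the fact that $R$ is noetherian.

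First, recall that for a single homogeneous element $r\in R$ of degree $d$, the object $\gam_{\mcV(r)}X$ fits in a functorial triangle
\[
\gam_{\mcV(r)}X \lra X \lra X[1/r] \lra \Si\gam_{\mcV(r)}X,
\]
where $X[1/r]$ is the homotopy colimit (i.e.\ mapping telescope) of the sequence $X\xra{r}\Si^{d}X\xra{r}\Si^{2d}X\xra{r}\cdots$; see \cite{Benson/Iyengar/Krause:2008a}. Since $F$ is exact, $R$-linear, and preserves coproducts, it preserves such homotopy colimits, and by $R$-linearity it sends multiplication by $r$ on $X$ to multiplication by $f(r)$ on $FX$. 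Hence $F(X[1/r])\cong (FX)[1/f(r)]$, and applying $F$ to the triangle above yields a triangle of the same shape for $FX$ and $f(r)$. Because the torsion functor is characterized up to unique isomorphism by this triangle, one concludes
\[
F\comp\gam_{\mcV(r)}\ \cong\ \gam_{\mcV(f(r)S)}\comp F.
\]

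Next, because $R$ is noetherian we may write $\fa=(r_1,\dots,r_n)$, so that $\mcV(\fa)=\mcV(r_1)\cap\dots\cap\mcV(r_n)$ and $\fa S$ is generated by $f(r_1),\dots,f(r_n)$. The torsion functor for an intersection of closed subsets factors as a composition of torsion functors for the individual subsets: an iterated application of the Mayer--Vietoris style identification from \cite{Benson/Iyengar/Krause:2008a} gives
\[
\gam_{\mcV(\fa)}\ \cong\ \gam_{\mcV(r_1)}\comp\cdots\comp\gam_{\mcV(r_n)},
\]
and similarly for $\gam_{\mcV(\fa S)}$ on $\sfU$. Composing the one-variable isomorphism established above yields the desired natural isomorphism $F\comp\gam_{\mcV(\fa)}\cong\gam_{\mcV(\fa S)}\comp F$.

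The main conceptual point, and the only thing to check with any care, is that the one-variable isomorphism is truly natural in $X$ and independent of the choice of generators of $\fa$. Naturality is immediate from the naturality of the telescope construction. Independence follows because both sides are right adjoints to the respective inclusions of torsion subcategories, so it suffices to observe that $F$ carries $\fa$-torsion objects of $\sfT$ to $\fa S$-torsion objects of $\sfU$; this in turn is a direct consequence of $R$-linearity, since for any compact $D\in\sfU^c$ and any $C\in\sfT^c$ with a morphism $FC\to D$, the $R$-action on $\Hom^{*}_{\sfU}(D,FX)$ factors through $S$, and the $\fa$-torsion hypothesis on $\Hom^{*}_{\sfT}(C,X)$ forces $f(\fa)$-torsion, hence $\fa S$-torsion, on the relevant Hom groups over $S$.
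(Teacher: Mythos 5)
Your overall strategy is in essence the same as the paper's: the paper's proof is a one-line appeal to the explicit description of $\gam_{\mcV(\fa)}$ as a homotopy colimit of Koszul objects built from the $R$-action (\cite[Proposition~2.9]{Benson/Iyengar/Krause:2011a}), which an exact, coproduct-preserving, $R$-linear functor visibly preserves, with each $r$ replaced by $f(r)$. Your reorganisation --- the principal case via the telescope triangle, then $\gam_{\mcV(\fa)}\cong\gam_{\mcV(r_1)}\comp\cdots\comp\gam_{\mcV(r_n)}$ using $\gam_{\mcV}\gam_{\mcW}\cong\gam_{\mcV\cap\mcW}$ from \cite{Benson/Iyengar/Krause:2008a} --- is a legitimate alternative packaging of the same idea (though ``Mayer--Vietoris'' is a misnomer; that identity is part of the basic calculus of the functors $\gam_{\mcV}$ and $L_{\mcV}$, not the triangle for a union).

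The gap is precisely at the step you flag as the only delicate one. To conclude that the image triangle $F\gam_{\mcV(r)}X\to FX\to (FX)[1/f(r)]$ \emph{is} the localisation triangle of $FX$ (which is what ``characterised up to unique isomorphism'' really requires, and what gives naturality), you must check that the third term is $\mcV(f(r)S)$-local --- you omit this, but it is easy, since $f(r)$ acts invertibly on $\Hom^{*}_{\sfU}(D,(FX)[1/f(r)])\cong\Hom^{*}_{\sfU}(D,FX)[1/f(r)]$ for every compact $D$, so $\Hom^{*}_{\sfU}(\kos{D}{f(r)},(FX)[1/f(r)])=0$ --- and that the first term is $f(r)S$-torsion. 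Your argument for the latter, via ``any $C\in\sfT^{c}$ with a morphism $FC\to D$'', does not work: a compact object $D$ of $\sfU$ need not be related to any $FC$ (note $F$ need not preserve compactness), and torsion of $\Hom^{*}_{\sfT}(C,X)$ does not transfer along an arbitrary morphism in any case. The correct argument is categorical rather than element-wise: the $\fa$-torsion objects of $\sfT$ form the localising subcategory generated by the Koszul objects $\kos{C}{\fa}$ with $C$ compact (see \cite{Benson/Iyengar/Krause:2008a, Benson/Iyengar/Krause:2011a}); $F$ sends these to $\kos{FC}{\fa S}$ by $R$-linearity and exactness, and a Koszul object on \emph{any} object of $\sfU$ is $\fa S$-torsion because each $f(r_i)$ acts nilpotently on all graded Hom's out of compacts; since the $\fa S$-torsion objects form a localising subcategory, $F\gam_{\mcV(r)}X$ is torsion. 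Alternatively, and more efficiently, use the formula $\gam_{\mcV(r)}X\cong\operatorname{hocolim}_m\Si^{-1}(\kos{X}{r^{m}})$ --- the principal case of the description the paper cites --- which identifies the fibre of $X\to X[1/r]$ functorially and settles naturality and independence of the chosen generators in one stroke. With either repair your proof goes through.
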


\begin{proof} 
The statement follows from an explicit description of  $\gam_{\mcV(\fa)}$ in terms of homotopy colimits; see \cite[Proposition~2.9]{Benson/Iyengar/Krause:2011a}.
\end{proof}

The following observation is clear.

\begin{lemma}
\label{le:periodicity}
For any element $r$ in $R\setminus \fp$, say of degree $n$, and $\fp$-local object $X$, the natural map $X\xra{r}\Si^n X$ is an isomorphism. \qed
\end{lemma}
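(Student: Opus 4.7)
The plan is to reduce the claim to the corresponding statement about graded $R$-modules and then invoke compact generation. Concretely, for any compact object $C$ in $\sfT^c$, post-composition with the map $X\xra{r}\Si^n X$ induces a morphism of graded $R$-modules
\[
r\cdot -\colon \Hom_\sfT^{*}(C,X)\lra \Hom_\sfT^{*}(C,\Si^n X)=\Hom_\sfT^{*}(C,X)[n],
\]
which, up to the sign coming from the compatibility of the left and right $R$-actions, coincides with multiplication by $r$ on $\Hom_\sfT^{*}(C,X)$.

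First I would recall that an $R$-module $M$ is $\fp$-local exactly when the localisation map $M\to M_\fp$ is invertible, and this is in turn equivalent to multiplication by every element of $R\setminus\fp$ being an automorphism of $M$. Since $X$ is $\fp$-local, the graded $R$-module $\Hom_\sfT^{*}(C,X)$ is $\fp$-local for every $C\in\sfT^c$, so multiplication by $r$ is an isomorphism on it. Hence the displayed map is an isomorphism for every $C\in\sfT^c$.

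Finally, since $\sfT$ is compactly generated, a morphism is an isomorphism as soon as it induces an isomorphism on $\Hom_\sfT^{*}(C,-)$ for all $C\in\sfT^c$; applying this to $X\xra{r}\Si^n X$ finishes the proof. No serious obstacle is anticipated: the only subtlety is the sign in the compatibility $\phi_{\Si^n X}(r)\alpha=(-1)^{|r||\alpha|}\alpha\phi_X(r)$, which does not affect the property of being an isomorphism.
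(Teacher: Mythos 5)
Your argument is correct and is exactly the standard reasoning the paper has in mind: it states the lemma as a clear observation with no written proof, the implicit argument being precisely that $r$ acts invertibly on the $\fp$-local modules $\Hom^*_\sfT(C,X)$ for all compact $C$, and that compact generation then detects the isomorphism (via the vanishing of the cone). Your remark that the sign in the left/right compatibility is harmless is also right.
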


\subsection*{Koszul objects} 

Fix objects $X,Y$ in $\sfT$. Any element $b$ in $R^d$  induces a morphism $X\to \Sigma^dX$ and let $\kos X {b}$ denote its mapping cone. This gives a morphism $X\to\Sigma^{-d}(\kos X {b})$. For a sequence of elements $\bsb:=b_{1},\dots,b_{n}$ in $R$  set $\kos X{\bsb}:= X_n$ where
\[
X_0:=X\quad\text{and} \quad X_i:=\kos{X_{i-1}}{b_i} \quad\text{for}  \quad 1\le i\le n\,.
\]
It is easy to check that for  $s=\sum_{i}|b_{i}|$ there is an isomorphism
\begin{equation}
\label{eq:kos-swap} 
\Hom_\sfT(X,\kos Y{\bsb}) \cong \Hom_\sfT(\kos X{\bsb},\Sigma^{s+n}Y).
\end{equation}

\subsection*{Injective cohomology objects}

Given an object $C$ in $\sfT^{\sfc}$ and an injective $R$-module $I$,
Brown representability yields an object $T(C,I)$ in $\sfT$ such that
\begin{equation}\label{eq:inj-coh} 
\Hom^*_R(\Hom^*_\sfT(C,-),I)\cong\Hom^*_\sfT(-,T(C,I))\,.
\end{equation} 
This yields a functor
\[ 
T\colon \sfT^{\sfc}\times\Inj R\lra\sfT.
\] 
For each $\fp$ in $\Spec R$, we write $I(\fp)$ for the injective hull of $R/\fp$ and set
\[ 
T_\fp:=T(-,I(\fp))\,,
\] 
viewed as a functor $\sfT^{\sfc}\to \sfT$. For objects $C$ and $D$ in $\sfT^{\sfc}$, applying \eqref{eq:inj-coh} twice one gets a natural $R$-linear isomorphism
\begin{equation}\label{eq:inj-coh-hom}
\Hom_\sfT^*(T(C,I),T(D,I))\cong\Hom^*_R(\Hom^*_R(\Hom^*_\sfT(C,D),I),I)\,.
\end{equation}

\section{Hochschild and Tate cohomology}
\label{sec:HH-and-Tate}
Let $k$ be a field and $A$ a finite dimensional Gorenstein $k$-algebra. The \emph{enveloping algebra} of $A$ is the $k$-algebra $A^e:=A\otimes_k A^\op$; it is also Gorenstein, by Proposition~\ref{pr:gor-tensor}, but this observation does not play a role in the sequel. The \emph{Hochschild cohomology} of the $k$-algebra $A$ is 
\[
\HH^*(A/k):=\Ext_{A^e}^*(A,A)\,.
\]
This is a graded-commutative $k$-algebra.  

When $X$ and $Y$ are Gorenstein projective $A$-modules,  we set 
\[
\uHom_A^*(X,Y)=\bigoplus_{i\in\bbZ}\uHom_A^*(X,\Omega^{-i}Y)\,.
\]
This is the \emph{Tate cohomology} of $X,Y$. There is a canonical homomorphism
\begin{equation}
\label{eq:ext-tate}
\Ext^*_A(X,Y)\lra \uHom_A^*(X,Y),
\end{equation}
of graded abelian groups, induced from the canonical morphism $\bst X\to\bsp X$ from a complete projective resolution to a projective resolution of $X$; see \cite[6.2]{Buchweitz:1987}. In particular, this map is surjective in degree $0$ and bijective in positive degrees.

\subsection*{Action on $\GProj A$}
For any $A$-module $M$ there is a canonical map
\[
 \HH^*(A/k)\xra{-\otimes_AM} \Ext^*_A(M,M)
\]
that is a morphism of graded $k$-algebras. When $X$ is Gorenstein projective, composing the map above with the one in \eqref{eq:ext-tate} one gets a homomorphism of $k$-algebras
\[
\phi_X\colon\HH^*(A/k)\lra \uHom_A^*(X,X)
\]
and this induces a linear action of $\HH^*(A/k) $ on $\uGProj A$, in the sense of Section~\ref{sec:cohomology-and-localisation}.

\begin{assumption}
\label{ass:R}
We fix a homogenous $k$-subalgebra $R$ of $\HH^*(A/k)$ such that
\begin{enumerate}[\quad\rm(1)]
\item
$R$ is \emph{connected}, that is to say, $R^0=k$;
\item
$R$ is finitely generated as a $k$-algebra.
\end{enumerate}
Connectedness implies that $R^{\ge 1}$ is the unique maximal ideal of $R$, which allows us to import the results from \cite{Benson/Iyengar/Krause/Pevtsova:2019a}. This is not a serious restriction. Indeed $A$ decomposes as a direct product of connected algebras, and for a connected algebra $A$ the ring $\HH^0(A/k)$, being the center of $A$, is a finite dimensional local ring, so the inclusion $R^0\subseteq \HH^0(A/k)$ induces a bijection on the spectra. 

As usual  $\Proj R$ denotes the set of prime ideals that do not contain $R^{\geqslant 1}$.
 
Condition (2) is equivalent to the condition that the ring $R$ is noetherian; see \cite[Proposition~1.5.4]{Bruns/Herzog:1998a}. Since the $\HH^{*}(A/K)$-action on  $\uGProj A$ restricts to  an $R$-action,  the noetherian property of $R$ allows one to invoke the constructions and results presented in Section~\ref{sec:cohomology-and-localisation}.
\end{assumption}

\subsection*{Base change}
Let $K/k$ be an extension of fields and set 
\[
A_K:=K\otimes_kA\,.
\]
This is a finite dimensional Gorenstein $K$-algebra, and extension of scalars 
\[
\Mod A\lra \Mod A_K,\quad X\mapsto X_K= K\otimes_{k}X
\]
and restriction 
\[
\Mod A_K\lra \Mod A, \quad X\mapsto  X\da_{A}=\Hom_K(K,X) 
\] 
form an adjoint pair of exact functors.

\begin{lemma}
For $A$-modules $X$ and $Y$, the canonical $K$-linear map 
\[
K\otimes_k\Hom_A(X,Y)\lra\Hom_{A_K}(X_K,Y_K)
\] 
is a isomorphism when $X$ is finite dimensional over $k$.\qed
\end{lemma}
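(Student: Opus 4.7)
The plan is to reduce to the case $X = A$ by a presentation argument, exploiting the flatness of $K$ over $k$ and the fact that a finite dimensional $A$-module is automatically finitely presented, since $A$ itself is noetherian.

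First, I would check the statement directly when $X = A$: both sides equal $K \otimes_k Y$ (respectively, as $K$-vector spaces via the identification $\Hom_A(A,Y) = Y$ and $\Hom_{A_K}(A_K, Y_K) = Y_K$), and the comparison map is the identity. By additivity of $\Hom$ and of $K \otimes_k -$ in the first argument, the statement then holds for $X = A^n$ for any $n \ge 0$.

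Next, since $X$ is finite dimensional over $k$ and $A$ is finite dimensional (hence noetherian), $X$ is finitely presented as an $A$-module. Choose a presentation
\[
A^m \lra A^n \lra X \lra 0\,.
\]
Applying the left exact functor $\Hom_A(-,Y)$ and then tensoring with $K$ over $k$ (which preserves exactness, as $K$ is flat over $k$) yields an exact sequence
\[
0 \lra K \otimes_k \Hom_A(X,Y) \lra K \otimes_k \Hom_A(A^n,Y) \lra K \otimes_k \Hom_A(A^m,Y)\,.
\]
On the other hand, base change sends the above presentation of $X$ to a presentation
\[
A_K^m \lra A_K^n \lra X_K \lra 0
\]
of $X_K$, so applying $\Hom_{A_K}(-, Y_K)$ gives the exact sequence
\[
0 \lra \Hom_{A_K}(X_K, Y_K) \lra \Hom_{A_K}(A_K^n, Y_K) \lra \Hom_{A_K}(A_K^m, Y_K)\,.
\]

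To conclude, I would compare these two exact sequences via the natural comparison map, which is evidently a morphism of sequences. By the first step the comparison maps at $A^n$ and $A^m$ are isomorphisms, so by the five-lemma (or directly, by identifying kernels) the comparison map at $X$ is an isomorphism as well. The only step that requires care is the appeal to finite presentation of $X$, but this is immediate from the finite dimensionality of $A$ over $k$.
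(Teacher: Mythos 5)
Your proof is correct; the paper states this lemma without proof (as standard), and your argument---reduce to $X=A^n$ and use a finite presentation of $X$ together with flatness of $K$ over $k$, left exactness of $\Hom$, and the five lemma---is precisely the standard argument the authors have in mind. No gaps: finite dimensionality of $X$ over $k$ does indeed give finite presentation over the noetherian algebra $A$, and the comparison map is natural, so the reduction goes through.
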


\begin{lemma}
\label{le:base-change}
Extension and restriction preserve projectivity and injectivity. Thus $A$ is Gorenstein if and only if $A_K$ is Gorenstein. In that case extension and restriction preserve Gorenstein projectivity.\qed
\end{lemma}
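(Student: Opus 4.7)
The plan rests on the observation that, since $K$ is free as a $k$-vector space (pick a basis), $A_K = K\otimes_k A$ is free---hence projective and flat---as a module over $A$ on both sides. Consequently, both extension and restriction are exact, and restriction sits in adjunctions with extension $A_K\otimes_A-$ on its left and coinduction $\Hom_A(A_K,-)$ on its right. Any $k$-linear retraction of $k\hookrightarrow K$ base-changes to an $A$-linear retraction $A_K\to A$, so $A$ is an $A$-direct summand of $A_K$; I would record this at the outset since the reverse implication in the Gorenstein equivalence needs it.

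Preservation of projectivity is immediate: extension sends $A$ to $A_K$, and restriction sends a summand of $A_K^{(J)}$ to a summand of a free $A$-module. Restriction preserves injectivity via the Ext change-of-rings isomorphism $\Ext^i_A(M,I\da_A)\cong \Ext^i_{A_K}(M_K,I)$, valid because extension is exact and preserves projectives. The hard part will be that extension preserves injectivity. For this I would exploit that $A$ is artinian: every injective $A$-module is a direct sum of indecomposable injectives of the form $D(P)$ for some finitely generated projective $A^{\op}$-module $P$, and for such $P$ there is a natural isomorphism $(DP)_K = K\otimes_k\Hom_k(P,k)\cong \Hom_K(P_K,K)$, which is injective over $A_K$ because $P_K$ is projective over $(A_K)^{\op}$. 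Since $A_K$ is noetherian, direct sums of these injectives remain injective.

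The Gorenstein equivalence follows readily. If $\injdim_A A\le d$, then extending an injective resolution of $A$ of length $d$ yields an injective resolution of $A_K$ over $A_K$ of the same length (exactness by flatness, injectivity of the terms from the previous step), and symmetrically on the right. Conversely, restricting an injective resolution of $A_K$ over $A_K$ gives an injective resolution of $A_K$ over $A$, so $\injdim_A A\le\injdim_A A_K$ is finite because $A$ is an $A$-summand of $A_K$.

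For Gorenstein projectivity, I would argue directly by change-of-rings. For extension: if $M$ is Gorenstein projective over $A$ and $Q$ is projective over $A_K$, then
\[
\Ext^i_{A_K}(M_K,Q)\cong \Ext^i_A(M,Q\da_A)=0\quad\text{for } i\ge 1,
\]
since $Q\da_A$ is $A$-projective. For restriction: if $N$ is Gorenstein projective over $A_K$ and $R$ is projective over $A$, then computing with a projective resolution of $N$ over $A_K$ (which, after restriction, is also a projective resolution of $N\da_A$ over $A$) and applying the coinduction adjunction term-wise gives
\[
\Ext^i_A(N\da_A,R)\cong \Ext^i_{A_K}(N,\Hom_A(A_K,R)).
\]
Coinduction is exact and preserves injectives, so applied to an injective resolution of $R$ over $A$ of length $\le\injdim_A A$ it yields an injective resolution of $\Hom_A(A_K,R)$ over $A_K$ of the same length. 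Thus $\Hom_A(A_K,R)$ has finite injective---and hence, by the Gorenstein property of $A_K$, finite projective---dimension, and Lemma~\ref{lem:G-properties}(1) supplies the required vanishing.
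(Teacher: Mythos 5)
Your proof is correct. The paper offers no argument for this lemma (it is stated as routine, with the proof omitted), so there is nothing to compare against; your write-up supplies exactly the standard verifications, and the two genuinely non-formal points---that extension preserves injectivity (via the Matlis decomposition of injectives over the artinian algebra $A$ and the isomorphism $(DP)_K\cong\Hom_K(P_K,K)$) and that restriction preserves Gorenstein projectivity (via coinduction, finiteness of $\injdim_{A_K}\Hom_A(A_K,R)$, and Lemma~\ref{lem:G-properties}(1))---are handled correctly, with the Ext-vanishing definition of Gorenstein projectivity used in the paper.
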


There are isomorphisms of $K$-algebras
\[
(A_K)^e\cong(A^e)_K \quad\text{and}\quad  \HH^{*}(A_{K}/K)\cong K\otimes_{k} \HH^{*}(A/k)\,.
\]
Thus extension of scalars yields a ring homomorphism $\HH^{*}(A/k)\to \HH^{*}(A_{K}/K)$, and setting $R_K:=K\otimes_kR$ one gets a ring homomorphism
\[
R\lra R_K\,.
\]
Observe that $R_K$ is connected and a noetherian $K$-subalgebra of $\HH^*(A_{K}/K)$.

Next we recall a construction from \cite{Benson/Iyengar/Krause/Pevtsova:2018} of a field extension $K/k$ and a closed point in $\Proj R_K$ lying over a given a point in $\Proj R$.

\begin{construction}
\label{con:generic} 
Let $R$ be a finitely generated, graded, connected, $k$-algebra. Fix a point $\fp$ in $\Proj R$, and let $d$ be the Krull dimension of $R/\fp$.

Choose elements $\bsa:=a_{0},\dots,a_{d-1}$ in $R$ of the same degree such that their image in $R/\fp$ is algebraically independent over $k$ and $R/\fp$ is finitely generated as a module over the subalgebra $k[\bsa]$. Set $K:=k(t_{1},\dots,t_{d-1})$, the field of rational functions in indeterminates $t_{1},\dots,t_{d-1}$ and
\[ 
b_{i}:= a_{i} - a_{0}t_{i}\quad\text{for $i=1,\dots,d-1$}
\] 
viewed as elements in $R_K$. Let $\fp'$ denote the extension of $\fp$ to $R_K$, and set
\begin{equation*}
\label{eq:generic-point} 
\fq:= \fp' + (\bsb)\qquad\text{and}\qquad \fm:=\sqrt \fq\,.
\end{equation*} 
The following statements hold:
\begin{enumerate}[\quad\rm(1)]
\item
$\fm$ is a closed point in $\Proj R_K$ with the property that $\fm\cap R=\fp$;
\item
the  induced extension of fields $k(\fp)\xra{\ \cong\ } k(\fm)$  is an isomorphism.
\end{enumerate}
The first part is contained in \cite[Theorem~7.7]{Benson/Iyengar/Krause/Pevtsova:2018}. The second one holds by construction; see \cite[Lemma~7.6, and (7.2)]{Benson/Iyengar/Krause/Pevtsova:2018}.

 Fix an object $X$ in $\uGProj A$. The sequence of elements  $\bsb$ in $R_K$ yields a morphism  $X_K\to \Omega^{s} (\kos {X_K}{\bsb})$, where $s=\sum_{i}|b_{i}|$. Composing its restriction to $A$ with the canonical morphism
 $X\to (X_K)\da_{A}$ gives in $\uGProj A$ a morphism
\[ 
f_X\colon X \lra \Omega^{s} (\kos {X_K}{\bsb})\da_A\,.
\] 
Since the $a_{i}$ are not in $\fp$, when $X$ is $\fp$-local Lemma~\ref{le:periodicity} yields a natural  isomorphism
\begin{equation}
\label{eq:periodicity} \Omega^{s}X \cong X
\end{equation} 
in $\uGProj A$.
\end{construction}

The result below extends \cite[Theorem~3.4]{Benson/Iyengar/Krause/Pevtsova:2019a} that concerns  modules over finite group schemes, but the argument is essentially the same.

\begin{theorem}
\label{thm:realisability}
For any Gorenstein projective $A$-module $X$, the morphism $f_X$ induces a natural  isomorphism
\[ 
\gam_{\fp}X\cong \gam_\fm(\kos{X_K}{\bsb})\da_A\,.
\] 
 in $\uGProj A$. When $X$ is $\fp$-torsion, this induces a natural isomorphism
\[ 
\gam_{\fp}X \cong (\kos {X_K}{\bsb})\da_A \,.
\]
\end{theorem}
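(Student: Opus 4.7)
The plan is to adapt the proof of \cite[Theorem~3.4]{Benson/Iyengar/Krause/Pevtsova:2019a} to the present setting. The only ingredients used in \emph{loc.~cit.}\ beyond the general formalism of local cohomology are the base-change property (Lemma~\ref{le:gamma-commute}), the $R$-linearity of extension $(-)_K$ and restriction $(-)\da_A$ through the ring map $R\to R_K$, and the point-theoretic data of Construction~\ref{con:generic}, namely $\fm\cap R=\fp$ and $k(\fp)\xra{\cong} k(\fm)$. Each of these is available in the present generality, and both $(-)_K$ and $(-)\da_A$ are exact and preserve coproducts, so Lemma~\ref{le:gamma-commute} applies to them.

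First I would reduce the first isomorphism to the second by substituting the $\fp$-torsion object $\gam_\fp X$ in place of $X$: Lemma~\ref{le:gamma-commute} applied to $(-)_K$ identifies $(\gam_\fp X)_K$ with $\gam_{\mcV(\fp')}(X_K)$, and the $\gam_\fm$ on the right of the first isomorphism reappears naturally. For the second assertion, I would first observe that $\kos{X_K}{\bsb}$ is $\mcV(\fm)$-torsion: by Lemma~\ref{le:gamma-commute} the object $X_K$ is $\mcV(\fp')$-torsion, and each $b_i$ acts null-homotopically on the Koszul construction, so the object is $\mcV(\fp'+(\bsb))=\mcV(\fm)$-torsion. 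Since $\fm$ is a closed point of $\Proj R_K$, this object is in addition $\fm$-local, so $\gam_\fm(\kos{X_K}{\bsb})\cong \kos{X_K}{\bsb}$. The shift $\Omega^s$ appearing in the definition of $f_X$ is absorbed by the periodicity isomorphism \eqref{eq:periodicity}, since each $a_i\notin\fp$ makes $\Omega^s$ act as the identity on $\fp$-local objects.

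The core step, and the main obstacle, is to show that the morphism $f_X$ induces the claimed isomorphism rather than just a map. I would verify this by testing against an arbitrary compact object $C\in\uGproj A$. The adjunction between $(-)_K$ and $(-)\da_A$, combined with the compatibility of Koszul objects with graded Hom out of a fixed object, yields a natural isomorphism
\[
\uHom_A^*(C,(\kos{X_K}{\bsb})\da_A)\cong \kos{\bigl(K\otimes_k\uHom_A^*(C,X)\bigr)}{\bsb}\,.
\]
Combined with two applications of Lemma~\ref{le:gamma-commute}, this reduces the desired statement to the following identity of graded $R$-modules: for every finitely generated graded $R$-module $M$, the $\fm$-local $\fm$-torsion part of $\kos{(K\otimes_k M)}{\bsb}$ is naturally isomorphic, as an $R$-module, to the $\fp$-local $\fp$-torsion part of $M$. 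This is the content of \cite[Lemma~7.6 and Theorem~7.7]{Benson/Iyengar/Krause/Pevtsova:2018}: the transcendence introduced by $k\to K$ is compensated exactly by killing the sequence $\bsb$, leaving behind the $\fp$-localisation of $M$ and nothing else.
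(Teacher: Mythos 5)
Your derivation of the first isomorphism from the second follows the paper, but your proof of the second isomorphism has genuine gaps. The first is the step asserting that $\kos{X_K}{\bsb}$, being $\mcV(\fm)$-torsion with $\fm$ a closed point of $\Proj R_K$, is automatically $\fm$-local. A closed point of $\Proj R_K$ is not maximal in $\Spec R_K$: the irrelevant ideal $R_K^{\ges 1}$ strictly contains $\fm$, so a $\mcV(\fm)$-torsion object can have graded Hom-modules with nonzero $R_K^{\ges 1}$-torsion, and localisation at $\fm$ kills exactly that part; such an object need not be $\fm$-local. Excluding this amounts to knowing that no nonzero piece of $\uGProj A_K$ is supported at the irrelevant maximal ideal, which for stable module categories of finite group schemes follows from the \fgc condition and is precisely the kind of input unavailable for a general Gorenstein algebra; in effect you are assuming a statement of the same nature as the one being proved. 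The paper never makes this claim: the right-hand side of its first isomorphism retains $\gam_\fm$, and its proof of the second isomorphism takes a different route.

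The second gap is the core step. The displayed formula $\uHom^*_A(C,(\kos{X_K}{\bsb})\da_A)\cong\kos{(K\otimes_k\uHom^*_A(C,X))}{\bsb}$ is not meaningful as an isomorphism of graded modules: a Koszul object is a mapping cone, and $\uHom^*_A(C,\kos{Y}{b})$ is related to $\uHom^*_A(C,Y)$ only by a long exact sequence, which does not determine it functorially. More seriously, testing against compacts cannot reduce the theorem to an identity of graded $R$-modules, because $\gam_\fp$ on objects is not computed by applying the underived torsion and localisation functors to $\uHom^*_A(C,-)$; and the module-level identity you invoke is not the content of \cite[Lemma~7.6, Theorem~7.7]{Benson/Iyengar/Krause/Pevtsova:2018}, which concern only the primes $\fp$, $\fm$ and their residue fields (they are exactly what Construction~\ref{con:generic} cites). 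The paper's actual argument is different: by \cite[Proposition~2.7]{Benson/Iyengar/Krause:2011a} the $\fp$-torsion objects form a localising subcategory of $\uGProj A$ generated by the Koszul objects $\kos{C}{\fp}$ with $C\in\Gproj A$; since both sides of the claimed isomorphism are exact coproduct-preserving functors related by the natural map induced by $f$, it suffices to treat these generators, and for them one runs the base-change argument of \cite[Theorem~8.8]{Benson/Iyengar/Krause/Pevtsova:2018}, which rests only on Lemma~\ref{le:gamma-commute}. This reduction to generators, which carries the real weight of the proof, is absent from your proposal.
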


\begin{proof} 
Given the second isomorphism, the first one can be checked as follows:
Let $X$ be a Gorenstein projective $A$-module and $\fp'$ the ideal in
Construction~\ref{con:generic}. Since $\gam_{\mcV(\fp)}X$ is
$\fp$-torsion, one gets the second isomorphism below:
\begin{align*} 
\gam_{\fp}X&\cong (\gam_{\mcV(\fp)} X)_\fp\\ & \cong
(\kos{(\gam_{\mcV(\fp)} X)_K}{\bsb})\da_A\\ & \cong  
(\kos{\gam_{\mcV(\fp')} (X_K)}{\bsb})\da_A\\ & \cong  
(\gam_{\mcV(\fp')}(\kos{X_K}{\bsb}))\da_A\\ & \cong  
(\gam_{\mcV(\fp'+(\bsb))}(\kos{X_K}{\bsb}))\da_A\\ & \cong  
(\gam_\fm(\kos{X_K}{\bsb}))\da_A\,.
\end{align*} 
The third one is by Lemma~\ref{le:gamma-commute}, applied to the functor $K\otimes_{k}(-)$ from $\uGProj A$ to $\uGProj A_{K}$. The next
one is standard while the penultimate one holds because $\kos{X_K}{\bsb}$
is $(\bsb)$-torsion. 

It remains to verify the second isomorphism in the statement. The modules satisfying this isomorphism form a localising subcategory of $\uGProj A$. Moreover,   by \cite[Proposition~2.7]{Benson/Iyengar/Krause:2011a},  the $\fp$-torsion modules form a localising subcategory of $\uGProj A$ generated by the modules $\kos X{\fp}$, for $X\in\Gproj A$. It thus suffices to verify the desired isomorphism for such modules. Since $\kos X{\fp}$ is $\fp$-torsion, the natural map 
\[
\gam_{\fp}(\kos X{\fp})=\gam_{\mcV(\fp)}(\kos X{\fp})_\fp\to (\kos X{\fp})_{\fp}
\]
is an isomorphism. Thus the task reduces to verifying that $f_{(\kos X{\fp})_\fp}$ is an isomorphism for each $X\in\Gproj A$. This is proved in \cite[Theorem~8.8]{Benson/Iyengar/Krause/Pevtsova:2018} for the case of finite group schemes and  $X=k$, the trivial representation. However the argument only uses \cite[Proposition~6.2(2)]{Benson/Iyengar/Krause/Pevtsova:2018} which in turn is a formal consequence of Lemma~\ref{le:gamma-commute}, and thus carries over to the present context.
\end{proof}

\section{The Gorenstein property}
\label{sec:Gorenstein}
Let $F\colon\uGproj A\to \uGproj A$ be the Serre functor from Theorem~\ref{th:serre}, given by
\[
F(X)=(\Omega \circ\GP\circ \,\nu) (X).
\] 
Given the description of $F$, the result below contains Theorem~\ref{thm:main2}.  It extends \cite[Theorem~5.1]{Benson/Iyengar/Krause/Pevtsova:2019a}, which deals with the case $A$ is the group algebra of a finite group scheme and $R=\Ext^*_A(k,k)$, its cohomology ring.  

\begin{theorem}
  \label{thm:gorenstein}
Let $k$ be a field, $A$ a finite dimensional, Gorenstein, $k$-algebra and $R\subseteq \HH^*(R/k)$ as in \ref{ass:R}.  Fix $\fp\in\Proj R$, and let $d$ be the Krull dimension of $R/\fp$. On $\uGproj A$ there is  a natural isomorphism of functors
\[ 
\gam_{\fp}\comp F\cong \Omega^{-d+1}\comp T_{\fp}\,.
\] 
Thus for any object $X$ in $\Gproj A$ there is a natural isomorphism
\[ 
\uHom_A(-,\Omega^{d-1} \gam_{\fp}F(X)) \cong\Hom_{R}(\Ext_A^*(X,-),I(\fp))\,.
\]
\end{theorem}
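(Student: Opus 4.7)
The plan is to first prove the functorial isomorphism $\gam_\fp\comp F\cong\Omega^{-d+1}\comp T_\fp$ on $\uGproj A$, and then deduce the stated hom-level isomorphism by unwinding the defining property~\eqref{eq:inj-coh} of $T_\fp$. For the functorial isomorphism, I would reduce via base change to a closed-point case and handle that case using the Serre duality of Theorem~\ref{th:serre}.

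\textbf{Reduction to a closed point.} Given $\fp\in\Proj R$ of Krull dimension $d$, Construction~\ref{con:generic} supplies a field extension $K/k$ and a sequence $\bsb=b_1,\dots,b_{d-1}$ in $R_K$ with $\fm:=\sqrt{\fp R_K+(\bsb)}$ a closed point of $\Proj R_K$ satisfying $\fm\cap R=\fp$. By Theorem~\ref{thm:realisability} one has $\gam_\fp F(X)\cong \gam_\fm(\kos{F(X)_K}{\bsb})\da_A$. Since $F=\Omega\comp\GP\comp\nu$ is an $R$-linear triangle functor compatible with extension of scalars (Lemma~\ref{le:base-change}) and with forming Koszul objects, this rewrites as $\gam_\fm F_K(\kos{X_K}{\bsb})\da_A$. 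A parallel manipulation of $T_\fp(X)$ via~\eqref{eq:kos-swap}, together with the $\fp$-local periodicity~\eqref{eq:periodicity}, identifies $\Omega^{-d+1}T_\fp(X)\da_A$ with $\Omega T_\fm(\kos{X_K}{\bsb})\da_A$. Thus it suffices to prove the closed-point statement $\gam_\fm F(X)\cong \Omega T_\fm(X)$ in $\uGProj A_K$.

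\textbf{The closed-point case.} Here I would test both sides against arbitrary compact objects $C\in\uGproj A_K$. Serre duality (Theorem~\ref{th:serre}) gives, as graded $R_K$-modules,
\[
\uHom^*_{A_K}(C,F(X))\cong D\,\uHom^{-*}_{A_K}(X,C)\,.
\]
Applying $\gam_\fm$, which on graded $R_K$-modules coincides with the $\fm$-torsion functor at the unique maximal graded ideal of the connected graded ring $R_K$, produces $\gam_\fm D\,\uHom^{-*}_{A_K}(X,C)$. On the other side,~\eqref{eq:inj-coh} identifies $\uHom^*_{A_K}(C,\Omega T_\fm(X))$ with $\Hom_{R_K}^{*-1}(\uHom^*_{A_K}(X,C),I(\fm))$. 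The remaining identification is graded Matlis duality: for any graded $R_K$-module $M$ there is a natural isomorphism $\gam_\fm D(M)\cong \Hom_{R_K}^{*-1}(M,I(\fm))$, obtained by writing both sides as (co)limits over the $\fm$-adic filtration and invoking classical Matlis duality on finite-length $\fm$-torsion quotients; the single $\Omega$-shift tracks the degree concentration of $I(\fm)$ relative to the graded $k$-linear dual.

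\textbf{Hom-level isomorphism.} Given the first isomorphism, applying~\eqref{eq:inj-coh} with $C=X$ and $I=I(\fp)$ and restricting to degree zero yields
\[
\uHom_A(Y,\Omega^{d-1}\gam_\fp F(X))\cong \uHom_A(Y,T_\fp(X))\cong \Hom_R(\uHom_A^*(X,Y),I(\fp))\,.
\]
To replace $\uHom_A^*(X,Y)$ by $\Ext_A^*(X,Y)$, note that the canonical map~\eqref{eq:ext-tate} has kernel concentrated in degree zero (maps factoring through projectives) and cokernel concentrated in negative degrees (negative Tate cohomology); applying the exact functor $\Hom_R(-,I(\fp))$ to the associated short exact sequences, both error terms pair trivially with $I(\fp)$ in degree zero, given the degree support of $I(\fp)$. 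The main obstacle is the closed-point Matlis duality step, in particular the accounting of the single $\Omega$-shift between $\gam_\fm D(-)$ and $\Hom^*_{R_K}(-,I(\fm))$ for modules that need not be finitely generated; the base-change bookkeeping is routine once Theorem~\ref{thm:realisability} and~\eqref{eq:kos-swap} are granted.
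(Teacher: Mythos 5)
Your global architecture (reduce to a closed point via Construction~\ref{con:generic} and Theorem~\ref{thm:realisability}, settle the closed point by Serre duality, then unwind \eqref{eq:inj-coh}) matches the paper, but your closed-point step has a genuine gap, in two places. First, you identify $\fm$ with ``the unique maximal graded ideal of the connected graded ring $R_K$'' and assert that applying $\gam_\fm$ amounts to applying the $\fm$-torsion functor to the graded modules $\uHom^*_{A_K}(C,-)$ for arbitrary compact $C$. Neither is correct: a closed point of $\Proj R_K$ is a prime with $\dim R_K/\fm=1$, not the irrelevant ideal $R_K^{\ges 1}$ (which is excluded from $\Proj$), and for the categorical local cohomology functor one does \emph{not} have $\uHom^*_{A_K}(C,\gam_\fm X)\cong\Gamma_\fm\uHom^*_{A_K}(C,X)$ in general --- $\gam_\fm$ is a composite of a localisation and a right adjoint and only guarantees that the resulting Hom-modules \emph{are} $\fp$-local and $\fp$-torsion. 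The paper sidesteps this by testing only against objects $Y$ that are already $\fm$-local and $\fm$-torsion, so that adjunction gives $\uHom_{A}(Y,\gam_\fm FX)\cong\uHom_A(Y,FX)$ on the nose, and Yoneda inside $\gam_\fm(\uGProj A_K)$ suffices. Second, your ``graded Matlis duality'' $\gam_\fm D(M)\cong\Hom^{*-1}_{R_K}(M,I(\fm))$ for arbitrary graded $M$, argued via the $\fm$-adic filtration, does not hold as stated: the colimit description of $\Hom(-,I(\fm))$ fails for modules that are not finitely generated, $\Hom_{R_K}(-,I(\fm))$ only sees the $\fm$-localisation while the torsion of $D(M)$ does not, and the degree shift you introduce is unexplained. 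The correct tool (used in the paper) is the duality of \cite[Lemma~A.2]{Benson/Iyengar/Krause/Pevtsova:2019a}, valid precisely for $\fm$-local $\fm$-torsion modules, applied to $\uHom^*_A(X,Y)$ with $Y$ torsion.

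The extra suspension you build into your closed-point statement, $\gam_\fm F\cong\Omega\comp T_\fm$, is itself a symptom of a bookkeeping error: at a closed point one has $d=\dim R_K/\fm=1$, so the theorem (and the paper's first claim) reads $\gam_\fm F\cong T_\fm$ with no shift; all of $\Omega^{-d+1}$ must come from the $d-1$ Koszul elements $\bsb$, via \eqref{eq:kos-swap} and the periodicity \eqref{eq:periodicity}. Your reduction commutes $F$ and $T$ past the Koszul construction on the input, which is where the shift discrepancy creeps in; the paper instead keeps the Koszul construction on the output, proving $(\kos{T_\fm(X_K)}{\bsb})\da_A\cong\Omega^{-d+1}T_\fp(X)$ by testing against $\fp$-local $\fp$-torsion $Y$ and using \eqref{eq:kos-swap}, \eqref{eq:periodicity}, \cite[Lemma~A.3]{Benson/Iyengar/Krause/Pevtsova:2019a} for the change of rings $R\to R_K$, and Theorem~\ref{thm:realisability}. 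Finally, your passage from $\uHom^*_A(X,-)$ to $\Ext^*_A(X,-)$ should not be argued by ``degree support of $I(\fp)$'': the kernel and cokernel in \eqref{eq:Tate-coh} are $R^{\ges1}$-torsion, hence vanish after localising at $\fp\in\Proj R$, and since $I(\fp)$ is $\fp$-local this gives $\Hom_R(\Ext^*_A(X,-),I(\fp))\cong\Hom_R(\uHom^*_A(X,-),I(\fp))$ via \eqref{eq:Tate-local}; that part is easily repaired, but the closed-point argument needs to be redone along the lines above.
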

This result is proved further below,  following some preparatory remarks.

\begin{remark}
  \label{re:Tate-coh} 
Let $X,Y$ be Gorenstein projective $A$-modules. The natural map 
\[
\Ext^{*}_A(X,Y)\to\uHom^{*}_A(X,Y)
\]
is compatible with action of $\HH^*(A/k)$, and hence of $R$. The map is surjective in degree zero, with kernel $\PHom_A(X,Y)$, the maps from $X$ to $Y$ that factor through a projective $A$-module. Since it is bijective in positive degrees one gets an exact sequence of graded $R$-modules
\begin{equation}
\label{eq:Tate-coh} 
0\lra \PHom_A(X,Y) \lra \Ext^{*}_A(X,Y)\lra \uHom^{*}_A(X,Y) \lra C\lra 0
\end{equation} 
with $C^{i}=0$ for $i\ge 0$. For degree reasons, the $R$-modules $\PHom_A(X,Y)$ and $C$ are $R^{\geqslant 1}$-torsion so for $\fp$ in $\Proj R$ the induced localised map is an isomorphism:
\begin{equation}
\label{eq:Tate-local}
\Ext^{*}_A(X,Y)_{\fp}\xra{\sim} \uHom^{*}_A(X,Y)_{\fp}\,.
\end{equation} 
\end{remark}

It follows from Lemma~\ref{le:base-change} that the  functor $F$ is compatible with base change.

\begin{lemma}
\label{le:serre-extension}
Let $K/k$ be a field extension and  $F_K\colon\uGproj A_K\to \uGproj A_K$ the corresponding Serre functor. For  $X$ in $\Gproj A$
there is a natural isomorphism $F_K(X_K)\cong F(X)_K$. \qed
\end{lemma}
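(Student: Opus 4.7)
The plan is to decompose $F=\Omega\circ\GP\circ\nu$ and verify that each of the three ingredients commutes with the extension of scalars functor $(-)_K=K\otimes_k(-)$. Since $K$ is flat over $k$, extension of scalars is exact, and by Lemma~\ref{le:base-change} it preserves projectivity, injectivity, finite projective dimension, and Gorenstein projectivity, so it descends to an exact functor $\uGproj A\to\uGproj A_K$ (and similarly for $\uGProj$). Naturality in $X$ will be automatic from the naturality of the individual identifications.

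First I would handle $\nu$. The key point is the identification of bimodules
\[
D_K(A_K)=\Hom_K(K\otimes_kA,K)\cong K\otimes_k\Hom_k(A,k)=(DA)_K
\]
which is valid because $A$ is finite dimensional over $k$. Combining with the standard base change isomorphism for tensor products one obtains
\[
\nu_K(X_K)=D_K(A_K)\otimes_{A_K}X_K\cong (DA)_K\otimes_{A_K}(A_K\otimes_AX)\cong K\otimes_k(DA\otimes_AX)=\nu(X)_K.
\]
For $\Omega$, choose a short exact sequence $0\to\Omega X\to P\to X\to 0$ with $P\in\proj A$; applying the exact functor $(-)_K$ yields a short exact sequence with $P_K\in\proj A_K$, so $\Omega_K(X_K)\cong(\Omega X)_K$ in $\uGproj A_K$.

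The main step, and the one requiring some care, is the compatibility of the Gorenstein projective approximation $\GP$ with base change. Apply Proposition~\ref{pr:approximation} to $\nu(X)\in\mod A$ to get an exact sequence
\[
0\lra F_{\nu X}\lra \GP(\nu X)\lra \nu(X)\lra 0
\]
with $\GP(\nu X)\in\Gproj A$ and $F_{\nu X}$ of finite projective dimension. Applying $(-)_K$ preserves exactness, Gorenstein projectivity, and finite projective dimension, so the resulting sequence
\[
0\lra (F_{\nu X})_K\lra (\GP(\nu X))_K\lra \nu(X)_K\lra 0
\]
exhibits $(\GP(\nu X))_K$ as a Gorenstein projective approximation of $\nu(X)_K\cong\nu_K(X_K)$. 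Since such an approximation is unique up to projective summands (and hence unique in $\uGproj A_K$), one obtains $\GP_K(\nu_K(X_K))\cong(\GP(\nu X))_K$ naturally in $X$.

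Concatenating the three natural isomorphisms gives
\[
F_K(X_K)=\Omega_K\GP_K\nu_K(X_K)\cong\Omega_K\GP_K(\nu(X)_K)\cong\Omega_K((\GP\nu X)_K)\cong(\Omega\GP\nu X)_K=F(X)_K,
\]
as required. The only genuine subtlety is the $\GP$ step, where one needs both the exactness of $(-)_K$ and the fact from Lemma~\ref{le:base-change} that extension of scalars preserves finite projective dimension, so that a $k$-linear approximation sequence extends to a $K$-linear one.
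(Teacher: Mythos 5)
Your proposal is correct and follows the same route as the paper, which simply asserts (with no written details) that $F=\Omega\circ\GP\circ\nu$ is compatible with base change as a consequence of Lemma~\ref{le:base-change}; your argument just makes explicit the three constituent compatibilities, with the $\GP$ step handled exactly as intended via base change of the Auslander--Buchweitz approximation sequence and its uniqueness in $\uGproj A_K$.
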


The argument  below is direct adaptation of the one for \cite[Theorem~5.1]{Benson/Iyengar/Krause/Pevtsova:2019a}.

\begin{proof}[Proof of Theorem~\ref{thm:gorenstein}]
The proof uses the following  observation: For any $A$-modules $X,Y$ that are $\fp$-local and $\fp$-torsion, there is an isomorphism $X\cong Y$ in $\uGProj A$ if and only if there is a natural isomorphism
\[ 
\uHom_A(M,X)\cong \uHom_A(M,Y)
\] 
for $\fp$-local and $\fp$-torsion $A$-modules $M$. This follows from Yoneda's lemma. 

In anticipation of using the preceding remark, we note that $\gam_\fp(X)$ and $T_\fp(X)$ are $\fp$-local and $\fp$-torsion. This is clear for $\gam_\fp(X)$ and follows for $T_\fp(X)$ from the fact that $I(\fp)$ is a $\fp$-local and $\fp$-torsion $R$-module. Another observation is that, by \eqref{eq:Tate-local}, for any $\fp$-local $R$-module $I$, there is an isomorphism
\[ 
\Hom_{R}(\Ext^*_A(X,-),I)\cong \Hom_{R}(\uHom_A^*(X,-),I).
\] 
Consequently, one can rephrase the defining isomorphism \eqref{eq:inj-coh} for the object $T_\fp(X)$ as a natural isomorphism
\[ 
\uHom_A(-,T_\fp(X) )\cong\Hom_{R}(\Ext_A^*(X,-),I(\fp))\,.
\]
Our task is to verify that, on $\uGproj A$, there is an isomorphism of functors 
\[
\gam_{\fp}F \cong \Omega^{-d+1}T_\fp\,.
\]
 We verify this when $\fp$ is  closed and then use a reduction to closed points. 

\begin{claim} 
  The desired isomorphism holds when $\fm$ is a closed point in $\Proj R$.
\end{claim}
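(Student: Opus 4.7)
For $\fm$ closed in $\Proj R$ we have $d = \dim R/\fm = 1$, so $\Omega^{-d+1} = \mathrm{id}$ and the target identity reads $\gam_\fm F(X) \cong T_\fm(X)$ for $X \in \Gproj A$. My plan is a Yoneda argument inside the compactly generated triangulated category $\gam_\fm(\uGProj A)$. Both sides lie there: $\gam_\fm F(X)$ by construction, and $T_\fm(X)$ because $I(\fm)$ is an $\fm$-local, $\fm$-torsion $R$-module, whence the object representing \eqref{eq:inj-coh} inherits those properties. Since compact generators can be taken in $\gam_\fm(\uGproj A)$, the task is to produce, naturally in such $M$, a $k$-linear isomorphism
\[
\uHom_A(M, \gam_\fm F(X)) \;\cong\; \uHom_A(M, T_\fm(X)).
\]

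For the left side, the adjunction $\gam_\fm \dashv \iota$ gives $\uHom_A(M, \gam_\fm F(X)) \cong \uHom_A(M, F(X))$, and Theorem~\ref{th:serre}'s Serre duality, extended to the compactly generated $\uGProj A$ via compactness of $X$, yields $\uHom_A(M, F(X)) \cong D\uHom_A(X, M)$. For the right side, the defining isomorphism \eqref{eq:inj-coh} combined with \eqref{eq:Tate-local} (applicable since $M$ is $\fm$-local) gives $\uHom_A(M, T_\fm(X)) \cong \Hom_R(\uHom_A^*(X, M), I(\fm))$. Matching the two descriptions reduces the claim to a graded local-duality statement at the closed point: for graded $R$-modules $N$ of the form $\uHom_A^*(X, M)$ with $M \in \gam_\fm(\uGproj A)$, there is a natural $k$-linear isomorphism $DN^0 \cong \Hom_R(N, I(\fm))$. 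Since $R/\fm$ is a graded field of Krull dimension one, $I(\fm)$ may be realised via the top local-cohomology module of $R$ along a positive-degree parameter $a \in R \setminus \fm$; the Koszul construction of $\gam_\fm$ then places $N$ in the range of validity of this graded Matlis-type duality.

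The principal obstacle is this last identification: because $R_\fm$ has Krull dimension one rather than being Artinian, the classical Matlis duality does not apply directly, and one must work via the explicit Koszul model that both $\gam_\fm$ and $I(\fm)$ admit along the parameter $a$, carefully tracking degree shifts. This parallels the approach in \cite[Theorem~5.1]{Benson/Iyengar/Krause/Pevtsova:2019a}. A secondary technical matter, namely extending Theorem~\ref{th:serre}'s Serre duality from $\uGproj A$ to pairings of a compact $X$ with an arbitrary $M \in \uGProj A$, is standard.
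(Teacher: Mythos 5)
Your proposal is correct and follows essentially the same route as the paper: drop $\gam_\fm$ by adjunction, apply Serre duality (Proposition~\ref{pr:serre}, which already allows an arbitrary Gorenstein projective test object, so no extension of Theorem~\ref{th:serre} is needed), rewrite $\uHom_A(-,T_\fm(X))$ via \eqref{eq:inj-coh} and \eqref{eq:Tate-local}, and reduce to the closed-point duality $D(N^0)\cong\Hom_R(N,I(\fm))$ for $\fm$-local, $\fm$-torsion $N$ --- which is exactly \cite[Lemma~A.2]{Benson/Iyengar/Krause/Pevtsova:2019a}, the result the paper cites, so your Koszul sketch (where, incidentally, $R/\fm$ is a one-dimensional graded domain, not a graded field) can simply be replaced by that reference. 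One small repair: run the Yoneda test over all $\fm$-local $\fm$-torsion modules rather than only compact generators --- your chain of isomorphisms uses no compactness of $M$, so this costs nothing --- since a natural isomorphism of the restricted functors on compacts alone does not formally identify the two objects.
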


The injective hull, $I(\fm)$, of the $R$-module $R/\fm$ is the same as that of the $R_\fm$-module $k(\fm)$, viewed as an $R$-module via restriction of scalars along the localisation map $R\to R_\fm$. Let $Y$ be a Gorenstein projective $A$-module that is $\fm$-local and $\fm$-torsion. The claim is a consequence of the following
computation:
\begin{align*} 
\uHom_{A}(Y, \gam_{\fm}(FX)) & \cong \uHom_{A}(Y, FX) \\ & \cong \uHom_{A}(X,Y)^{\vee} \\ & \cong
\Hom_{R_\fm}(\uHom_{A}^{*}(X,Y),I(\fm)) \\ & \cong
\Hom_{R}(\uHom_{A}^{*}(X,Y),I(\fm)) \\ & \cong \uHom_A(Y,T_\fm(X))\,.
\end{align*} 
The first isomorphism holds because $Y$ is $\fm$-torsion; the second is Serre duality, Proposition~\ref{pr:serre}, and the next one is by
\cite[Lemma~A.2]{Benson/Iyengar/Krause/Pevtsova:2019a}, which applies because $\uHom_{A}^{*}(X,Y)$ is $\fm$-local and $\fm$-torsion as an $A$-module.

\medskip

Let $\fp$ be a point in $\Proj R$ that is not closed, and let $K$, $\bsb$, and $\fm$ be as in Construction~\ref{con:generic}. Recall that $\fm$ is a closed point in $R_K$ lying over $\fp$.

\begin{claim} 
In $\uGProj A$ there is an  isomorphism of $A$-modules
\begin{equation}
\label{eq:claim-T-module} 
(\kos {T_{\fm}(X_K)}{\bsb})\da_A \cong \Omega^{-d+1} T_{\fp}(X)
\end{equation} 
where $d$ is the Krull dimension of $R/\fp$.
\end{claim}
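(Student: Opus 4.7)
\emph{Proof plan.} The goal is to show that $(\kos{T_\fm(X_K)}{\bsb})\da_A$ enjoys the defining universal property of $\Omega^{-d+1}T_\fp(X)$. By Yoneda applied to the compactly generated subcategory $\gam_\fp(\uGProj A)$---as recalled at the start of the proof---it suffices to exhibit, for every compact $M$ in $\gam_\fp(\uGProj A)$ (equivalently, every finite dimensional Gorenstein projective $A$-module that is $\fp$-local and $\fp$-torsion), a natural isomorphism
\[
\uHom_A(M,(\kos{T_\fm(X_K)}{\bsb})\da_A)\;\cong\;\uHom_A(M,\Omega^{-d+1}T_\fp(X)).
\]
A preliminary point is that both sides really lie in $\gam_\fp(\uGProj A)$: the Koszul object $\kos{T_\fm(X_K)}{\bsb}$ is $\fm$-local (inherited from $T_\fm(X_K)$, since localisation commutes with cones) and $\fm$-torsion (being $(\bsb)$-torsion as well as $\fp'$-torsion, with $\fm=\sqrt{\fp'+(\bsb)}$), and both properties transport to $\fp$-local $\fp$-torsion under restriction along $R\to R_K$ because $\fm\cap R=\fp$ by Construction~\ref{con:generic}(1).

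The heart of the argument is the computation of the left-hand side in four formal moves. First, the extension--restriction adjunction converts $\uHom_A(M,(-)\da_A)$ into $\uHom_{A_K}(M_K,-)$. Second, the Koszul swap \eqref{eq:kos-swap} (with $n=d-1$ and $s=\sum_i|b_i|$) replaces the result by $\uHom_{A_K}(\kos{M_K}{\bsb},\Omega^{-s-(d-1)}T_\fm(X_K))$. Third, the universal property of $T_\fm(X_K)$ over $R_K$ turns this Tate hom into a graded Hom into $I(\fm)$, and the inner argument $\uHom^*_{A_K}(X_K,\kos{M_K}{\bsb})$ rewrites as $\kos{\,K\otimes_k\uHom^*_A(X,M)\,}{\bsb}$: Tate cohomology is an exact triangle functor that commutes with base change on the second argument (thanks to the finite-dimensionality of $X$) and with the Koszul construction. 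Fourth, one identifies
\[
\Hom_{R_K}\bigl(\kos{K\otimes_k V}{\bsb},I(\fm)\bigr)\;\cong\;\Hom_R(V,I(\fp))
\]
up to a homological suspension by $d-1$ and an internal degree shift by $s$, for any graded $R$-module $V$. Applying this with $V=\uHom^*_A(X,M)$ and then unwinding the universal property of $T_\fp$ on the right-hand side produces matching expressions for the two Hom groups, completing the argument.

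The main obstacle is the fourth step, which is where the shift $\Omega^{-d+1}$ in the statement finally emerges. Conceptually, the Koszul complex on the algebraically independent sequence $\bsb$ realises the $R_K$-injective hull of $k(\fm)$ as a $(d-1)$-fold suspension of the $R$-injective hull of $k(\fp)$, once one invokes the residue-field isomorphism $k(\fm)\cong k(\fp)$ from Construction~\ref{con:generic}(2); this is a standard piece of graded commutative algebra. The remaining work is degree bookkeeping, ensuring that the homological shift by $d-1$ and the internal shift by $s$ coming from the Koszul swap assemble correctly. This step mirrors the corresponding one in the finite-group-scheme argument of \cite{Benson/Iyengar/Krause/Pevtsova:2019a}, and should transpose to the present setting without substantive change.
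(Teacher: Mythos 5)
Your opening moves (restriction--extension adjunction, then the Koszul swap \eqref{eq:kos-swap}, then the defining property of $T_{\fm}(X_K)$) coincide with the paper's, but after that the argument diverges and the two steps you rely on are genuine gaps. First, the claim in your third step that $\uHom^{*}_{A_K}(X_K,\kos{M_K}{\bsb})$ ``rewrites as'' $\kos{\,K\otimes_k\uHom^{*}_A(X,M)\,}{\bsb}$ is not correct: Tate cohomology does not commute with the Koszul construction. Applying $\uHom^{*}_{A_K}(X_K,-)$ to the triangle defining $\kos{M_K}{b_i}$ yields only a long exact sequence, so the resulting graded module is an extension of a shifted kernel of multiplication by $b_i$ by the corresponding cokernel; it is not the mapping cone of $b_i$ on the module (and such a cone is not even an object of the abelian category of graded modules, while a derived-category identification would itself require proof and would not survive the subsequent $\Hom_{R_K}(-,I(\fm))$ step without further argument). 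Second, the ``fourth step'' that you yourself flag as the main obstacle --- the isomorphism $\Hom_{R_K}(\kos{K\otimes_k V}{\bsb},I(\fm))\cong\Hom_R(V,I(\fp))$ up to a suspension by $d-1$ and an internal shift by $s$ --- is precisely where all the content of the claim sits, and it is neither proved nor standard; the heuristic that the Koszul complex on $\bsb$ ``realises $I(\fm)$ as a $(d-1)$-fold suspension of $I(\fp)$'' is not a correct statement as it stands. Note also that the internal shift by $s$ cannot be disposed of by bookkeeping alone: in the paper it disappears only because the test object is $\fp$-local, via Lemma~\ref{le:periodicity} and \eqref{eq:periodicity}, a point your plan never invokes.

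The missing idea is Theorem~\ref{thm:realisability}, which is exactly the tool the paper prepared for this claim and which your proposal never uses. The paper tests both sides against an $\fp$-local $\fp$-torsion module $Y$, includes the shift $\Omega^{d-1}$ on the left so that the swap \eqref{eq:kos-swap} together with \eqref{eq:periodicity} lands on $T_{\fm}(X_K)$ with no residual shift, converts $\Hom_{R_K}(-,I(\fm))$ into $\Hom_R(-,I(\fp))$ by the shift-free comparison of \cite[Lemma~A.3]{Benson/Iyengar/Krause/Pevtsova:2019a} (valid because $\uHom^{*}_{A_K}(X_K,\kos{Y_K}{\bsb})$ is $\fm$-torsion and $k(\fp)\cong k(\fm)$), and then uses adjunction plus Theorem~\ref{thm:realisability} to identify $(\kos{Y_K}{\bsb})\da_A$ with $\gam_{\fp}Y\cong Y$ inside $\uGProj A$. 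This keeps the entire computation at the level of objects of the stable category and avoids any graded-module Koszul calculus. To repair your argument you should replace your third and fourth steps by this combination; as written, they do not go through.
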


Let $Y$ be an $A$-module that is $\fp$-local and $\fp$-torsion. Then we
have the following:
\begin{align*} 
  \uHom_{A}(Y, \Omega^{d-1}(\kos {T_{\fm}(X_K)}{\bsb})\da_A) 
  &\cong\uHom_{A_{K}}(Y_{K},\Omega^{d-1}(\kos {T_{\fm}(X_K)}{\bsb})) \\ 
  &\cong\uHom_{A_{K}}(\kos{Y_{K}}{\bsb},T_{\fm}(X_K)) \\ 
  &\cong\Hom_{R_K}(\uHom_{A_{K}}^{*}(X_K,\kos{Y_{K}}{\bsb}),I(\fm))\\ 
  &\cong \Hom_{R}(\uHom_{A_{K}}^{*}(X_K,\kos{Y_{K}}{\bsb}),I(\fp))\\ 
  &\cong\Hom_{R}(\uHom_{A}^{*}(X,(\kos{Y_{K}}{\bsb})\da_A),I(\fp))\\ 
  &\cong \Hom_{R}(\uHom_{A}^{*}(X,Y),I(\fp)) \\ 
  &\cong\uHom_A(Y, T_\fp(X))\,.
\end{align*} 
The first and  fifth isomorphisms are by adjunction. The second one is a direct computation using \eqref{eq:kos-swap} and \eqref{eq:periodicity}. The next one is by definition and the fourth isomorphism is by \cite[Lemma~A.3]{Benson/Iyengar/Krause/Pevtsova:2019a}, applied to the  homomorphism $R\to R_K$; it applies as the $R_K$-module $\uHom_{A_{K}}^{*}(X_K, \kos{Y_{K}}{\bsb})$ is $\fm$-torsion. The sixth isomorphism is by Theorem~\ref{thm:realisability}, and the last one  by definition.  This justifies the claim. 

\medskip

Consider now the chain of isomorphisms:
\begin{align*}
\gam_\fp F(X)&\cong\gam_\fm (\kos{F(X)_K}{\bsb})\da_A\\
&\cong\gam_\fm (\kos{F_K(X_K)}{\bsb})\da_A\\
&\cong(\kos{\gam_\fm (F_K(X_K))}{\bsb})\da_A\\
&\cong (\kos{T_\fm (X_K)}{\bsb})\da_A\\
&\cong \Omega^{-d+1}T_\fp(X).
\end{align*}
The first isomorphism is by Theorem~\ref{thm:realisability}, the second by Lemma~\ref{le:serre-extension}, the third is clear, the
fourth follows from the first claim, since $\fm$ is a closed point for $A_K$, and the last one follows from the second claim.

This completes the proof that the functors $\gam_{\fp}\circ F$ and $\Omega^{-d+1}\circ T_\fp$ are isomorphic. Given this and the alternative description of $T_\fp$ above, the last isomorphism in the statement follows.
\end{proof}

Next we record a corollary of Theorem~\ref{thm:gorenstein} concerning $\tors_{\fp}(\uGproj A)$, the  $\fp$-torsion objects in the $\fp$-localisation of $\uGproj A$; see \cite[\S7]{Benson/Iyengar/Krause/Pevtsova:2019a} and the references therein for details of this construction. The $R$-linear triangle equivalences
\[
\nu\colon \uGproj A\to \oGinj A\quad\text{and}\quad \GP\colon \oGinj A\to \uGproj A
\]
induce $R_\fp$-linear triangle equivalences 
\begin{equation}
\label{eq:tors-diagram}
\begin{gathered}
\begin{tikzcd}
\tors_{\fp}(\uGproj A) \ar[d,hookrightarrow] \ar[r,"\nu_\fp", "\sim" swap] & \tors_{\fp}(\oGinj A)\ar[d,hookrightarrow] \\
(\uGproj A)_\fp \ar[r,"\nu_\fp", "\sim" swap] & (\oGinj A)_\fp 
\end{tikzcd}
\end{gathered}
\quad\text{and}\quad
\begin{gathered}
\begin{tikzcd}
\tors_{\fp}(\oGinj A) \ar[d,hookrightarrow] \ar[r,"\GP_{\fp}", "\sim" swap] & \tors_{\fp}(\uGproj A)\ar[d,hookrightarrow] \\
(\oGinj A)_\fp \ar[r,"\GP_{\fp}", "\sim" swap] & (\uGproj A)_\fp 
\end{tikzcd}
\end{gathered}
\end{equation}
compatible with the localisation functor; see \cite[Remark~7.1]{Benson/Iyengar/Krause/Pevtsova:2019a}. The  result below can be interpreted as the statement that  the category $\tors_{\fp}(\uGproj A)$, and hence also $\tors_{\fp}(\oGinj A)$, has a Serre functor.

\begin{corollary}
\label{cor:Serre-duality-GP}
For $X,Y$ in $\cat{C}:=\tors_{\fp}(\uGproj A)$, there is a natural isomorphism
\[
\Hom_{R_\fp}(\uHom^*_{\cat{C}}(X,Y),I(\fp)) \cong \uHom_{\cat{C}}(Y,\Omega^{d}\GP_{\fp}\nu_{\fp}(X))\,.
\]
\end{corollary}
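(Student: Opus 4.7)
My plan is to deduce the corollary directly from Theorem~\ref{thm:gorenstein} by localising everything at $\fp$. Both sides of the asserted natural isomorphism are additive in each variable, and the category $\cat{C} = \tors_\fp(\uGproj A)$ is generated, up to direct summands, by images of objects of $\Gproj A$ under the canonical functor $\uGproj A \to \cat{C}$. It therefore suffices to establish the isomorphism when $X, Y$ arise from objects $X_0, Y_0 \in \Gproj A$ whose images in the $\fp$-localisation are $\fp$-local and $\fp$-torsion.

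First I rewrite the left-hand side. Because $I(\fp)$ is an $\fp$-local graded $R$-module, $\Hom_R(M, I(\fp)) \cong \Hom_{R_\fp}(M_\fp, I(\fp))$ for every graded $R$-module $M$. Combined with the identification $\uHom^*_\cat{C}(X,Y) \cong \uHom^*_A(X_0, Y_0)_\fp$ and the isomorphism \eqref{eq:Tate-local} of Remark~\ref{re:Tate-coh}, this yields
\[
\Hom_{R_\fp}(\uHom^*_\cat{C}(X,Y), I(\fp)) \cong \Hom_R(\Ext^*_A(X_0, Y_0), I(\fp)).
\]

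Next, I invoke Theorem~\ref{thm:gorenstein} with first argument $X_0$ and dash replaced by $Y_0$: the display above equals $\uHom_A(Y_0, \Omega^{d-1} \gam_\fp F(X_0))$. Since $F = \Omega \circ \GP \circ \nu$ and $\gam_\fp$ commutes with the triangle autoequivalence $\Omega$, this simplifies to $\uHom_A(Y_0, \Omega^d \gam_\fp \GP \nu(X_0))$. Finally, because $Y_0$ is $\fp$-local and $\fp$-torsion, morphisms out of $Y_0$ in $\uGProj A$ coincide with morphisms in the local-torsion subcategory $\gam_\fp(\uGProj A)$; and under the equivalence (up to direct summands) of its compact objects with $\cat{C}$ coming from \cite[\S7]{Benson/Iyengar/Krause/Pevtsova:2019a}, the object $\gam_\fp \GP \nu(X_0)$ corresponds to $\GP_\fp \nu_\fp(X)$. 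Combining these identifications yields the stated isomorphism.

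The main obstacle is the last step: verifying carefully that the functor $\gam_\fp \circ \GP \circ \nu$ on $\uGProj A$ descends, under the standard identification, to $\GP_\fp \circ \nu_\fp$ on $\cat{C}$. This amounts to using Lemma~\ref{le:gamma-commute} to commute $\gam_\fp$ past the $R$-linear triangle functors $\nu$ and $\GP$, together with chasing the commutative squares in \eqref{eq:tors-diagram} that relate the various localised categories. Once this compatibility is in place, the chain of natural isomorphisms above delivers the corollary.
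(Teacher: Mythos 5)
Your argument follows the paper's proof essentially step for step: reduce to objects of the form $M_\fp$ with $M\in\uGproj A$ $\fp$-torsion, rewrite the left-hand side via \eqref{eq:Tate-local} and adjunction against $I(\fp)$, apply Theorem~\ref{thm:gorenstein}, and identify $\Omega^{d-1}\gam_\fp F$ with $\Omega^{d}\GP_\fp\nu_\fp$ after localisation. The only slips are cosmetic: it is the target $\Omega^{d-1}\gam_\fp F(X_0)$ (not $Y_0$ itself, which is merely finite dimensional and $\fp$-torsion) that is $\fp$-local, which is what lets you pass between $\uHom_A(Y,-)$ and $\uHom_A(Y_0,-)$, and your flagged ``main obstacle'' is dispatched in the paper without Lemma~\ref{le:gamma-commute}, simply by noting that $F(X_0)$ is $\fp$-torsion, so $\gam_\fp F(X_0)\cong F(X_0)_\fp$, combined with the compatibility recorded in \eqref{eq:tors-diagram}.
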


\begin{proof}
Up to direct summands, the categories $\cat{C}$ and $\gam_{\fp}(\uGProj A)^{\sfc}$ are equivalent, and the compact objects in $\gam_{\fp}(\uGProj A)$ are of the form $M_\fp$, for some $M\in \uGproj A$ that is $\fp$-torsion; see \cite[Remark~7.2]{Benson/Iyengar/Krause/Pevtsova:2019a}. We obtain the desired isomorphism by reinterpreting the isomorphisms in Theorem~\ref{thm:gorenstein}, as follows.

Fix $\fp$-torsion objects $M,N$ in $\uGproj A$. One has the first isomorphism below because $M$ is finite dimensional:
\begin{align*}
\Hom_{R_\fp}(\uHom^*_{A}(M_\fp,N_\fp),I(\fp)) 
	&\cong \Hom_{R_\fp}(\uHom^*_{A}(M,N)_{\fp},I(\fp)) \\
	&\cong \Hom_{R_\fp}(\Ext^*_{A}(M,N)_{\fp},I(\fp))\\
	&\cong \Hom_{R}(\Ext^*_{A}(M,N),I(\fp))\,.
\end{align*}
The second one is by \eqref{eq:Tate-local}, and the last one is by adjunction. 

On the other hand, since $M$ is $\fp$-torsion, so is $F(M)$ and hence one has the first isomorphism below:
\[
\Omega^{d-1} \gam_{\fp}F(M)\cong \Omega^{d-1} F(M)_\fp \cong \Omega^{d-1}(\Omega \GP \nu(M))_\fp 
	\cong \Omega^{d}\GP_\fp\nu_\fp(M_\fp)
\]
The second one is by the definition of $F$ and the third one follows by the discussion around \eqref{eq:tors-diagram}.  Applying $\uHom_{A}(N,-)$ to the composition yields the first isomorphism below, whilst the second one holds as the covariant argument is $\fp$-local:
\begin{align*}
  \uHom_{A}(N_\fp,\Omega^{d}\GP_{\fp}\nu_{\fp}(M_\fp))
  &\cong \uHom_{A}(N_\fp,\Omega^{d-1}\gam_{\fp}F(M)) \\
  &\cong \uHom_{A}(N,\Omega^{d-1}\gam_{\fp}F(M))\,.
\end{align*}
The isomorphisms above and Theorem~\ref{thm:gorenstein}, applied with $X=M$ and $Y=N$, yield the desired result.
\end{proof}

\begin{lemma}
\label{lem:Tate-local}
For any $\fp\in \Proj R$ the quotient functor $\dbcat A\to \sing A$ induces equivalences
\[
{\dbcat A}_\fp \xra{\ \sim\ } {\sing A}_\fp \quad\text{and}\quad \tors_{\fp}(\dbcat A) \xra{\ \sim\ } \tors_{\fp}(\sing A)
\]
compatible with the $R_\fp$ actions.
\end{lemma}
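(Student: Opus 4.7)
The strategy is to show that the kernel $\cat D^{\mathrm b}(\proj A)$ of the Verdier quotient $\dbcat A \to \sing A$ vanishes upon $\fp$-localising morphisms, from which both equivalences then follow essentially formally.

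\textbf{Key observation.} For any perfect complex $P$ and any $X \in \dbcat A$, the graded $R$-module $\Hom^{*}_{\dbcat A}(P,X)$ is bounded. This is immediate for $P = A$, and propagates through $\cat D^{\mathrm b}(\proj A) = \Thick(A)$ since boundedness of $\Hom^*$ is preserved under finite direct sums, shifts, summands, and cones. Since $\fp \in \Proj R$ we may choose $r \in R^{\ges 1} \setminus \fp$. Multiplication by $r$ raises degree by $|r| \geq 1$, and so acts locally nilpotently on any bounded graded module. As $r$ is invertible in $R_\fp$, one deduces
\[
\Hom^{*}_{\dbcat A}(P,X)_\fp \;=\; 0 \;=\; \Hom^{*}_{\dbcat A}(X,P)_\fp
\]
for every perfect complex $P$ and every $X \in \dbcat A$.

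\textbf{First equivalence.} Recall that each morphism in $\sing A$ is represented by a roof $X \xla{s} X' \to Y$ whose cone lies in $\cat D^{\mathrm b}(\proj A)$. The triangle $X' \to X \to \cone(s) \to$ induces a long exact sequence of graded $R$-modules in which the terms involving $\cone(s)$ vanish after $\fp$-localisation, by the key observation. Hence the map $\Hom^{*}_{\dbcat A}(X,Y)_\fp \to \Hom^{*}_{\dbcat A}(X',Y)_\fp$ is an isomorphism, and the colimit formula for Verdier Homs collapses to give the natural isomorphism
\[
\Hom^{*}_{\dbcat A}(X,Y)_\fp \;\xra{\;\sim\;}\; \Hom^{*}_{\sing A}(X,Y)_\fp .
\]
Since the Verdier quotient is the identity on objects, this yields the first equivalence ${(\dbcat A)}_\fp \xra{\sim} {(\sing A)}_\fp$, and $R_\fp$-linearity is automatic from $R$-linearity of the quotient functor.

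\textbf{Second equivalence.} The $\fp$-torsion condition on an object $X$ is intrinsic to the $R_\fp$-module structure on $\Hom^{*}_{\cat T_\fp}(C,X)$ as $C$ ranges over a set of generators; but these $R_\fp$-modules are identified on the two sides by the first equivalence. Restricting the equivalence $(\dbcat A)_\fp \xra{\sim} (\sing A)_\fp$ to $\fp$-torsion objects therefore gives $\tors_{\fp}(\dbcat A) \xra{\sim} \tors_{\fp}(\sing A)$, compatibly with the $R_\fp$-actions.

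\textbf{Main difficulty.} The chief subtlety is conceptual rather than computational: one must interpret $(\dbcat A)_\fp$ as a triangulated category, since $\dbcat A$ is not presented as the compact subcategory of a compactly generated category within the framework of Section~\ref{sec:cohomology-and-localisation}. This can be handled by appealing to the construction of $\cat T_\fp$ and $\tors_\fp(\cat T)$ in \cite[\S7]{Benson/Iyengar/Krause/Pevtsova:2019a}, or equivalently by realising $\dbcat A$ as a full triangulated subcategory of $\KInj A$ and transporting the $\fp$-localisation from there. Once the framework is in place, the entire argument reduces to the boundedness observation above.
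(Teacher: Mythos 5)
Your proof is correct, and its engine is the same as the paper's: the discrepancy between $\Hom^{*}_{\dbcat A}(X,Y)$ and $\Hom^{*}_{\sing A}(X,Y)$ lives in bounded degrees, hence is killed by inverting any element of $R^{\geqslant 1}\setminus\fp$, so the localised quotient functor is fully faithful, while essential surjectivity is trivial; the torsion subcategories then match by definition. Where you differ is in how the full faithfulness is certified. The paper simply invokes \eqref{eq:Tate-local}, i.e.\ the comparison of $\Ext^{*}_{A}$ with Tate cohomology from Remark~\ref{re:Tate-coh} (resting on Buchweitz's theorem that $\Ext^{*}_{A}(X,Y)\to\uHom^{*}_{A}(X,Y)$ is bijective in high degrees), whereas you re-derive the localised isomorphism from first principles: boundedness of $\Hom^{*}_{\dbcat A}(P,-)$ for $P$ perfect, propagated through $\Thick(A)$, combined with the calculus of fractions for the Verdier quotient and exactness of localisation through the filtered colimit. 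Your route is more self-contained (it never passes through $\uGproj A$), at the cost of redoing a computation the paper already has on hand. Two minor remarks: boundedness of $\Hom^{*}_{\dbcat A}(X,P)$ is not ``immediate for $P=A$''---it uses that $A$ has finite injective dimension, i.e.\ the Gorenstein hypothesis---but with the roof convention you chose only the vanishing of $\Hom^{*}_{\dbcat A}(P,Y)_{\fp}$ with the perfect complex in the contravariant slot is actually needed, so nothing breaks; and your concern about the meaning of $({\dbcat A})_{\fp}$ is resolved exactly as you suggest, since the paper takes the morphism-level localisation of \cite[\S7]{Benson/Iyengar/Krause/Pevtsova:2019a}, which is why the second equivalence is, as both you and the paper observe, immediate from the first.
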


\begin{proof}
The quotient functor is essentially surjective and hence so is its $\fp$-localisation ${\dbcat A}_\fp \to {\sing A}_\fp$. The latter is also fully faithful, by \eqref{eq:Tate-local}, and so is an equivalence of categories. It is also $R_\fp$-linear, by construction. Given this, it is immediate from the definition that the $\fp$-torsion subcategories are equivalent. 
\end{proof}

\begin{proof}[Proof of Theorem~\ref{thm:main1}]
The hypotheses is that $A$ is a finite dimensional Gorenstein algebra and $R$ is a finitely generated, homogenous, $k$-subalgebra of $\HH^*(A/k)$ with $R^0=k$.  Also, $\cat{D}:=\dbcat A$. We fix a homogeneous prime ideal $\fp$ in $\Proj R$. We want to verify that for all $X,Y$ in $\tors_{\fp}(\cat D)$ there are natural isomorphisms
\[
  \Hom_{R_{\fp}}(\Hom^{*}_{\tors_{\fp}(\cat D)}(X,Y),I(\fp))\cong\Hom_{\tors_{\fp}(\cat D)}(Y,\Sigma^{-d}\nu_\fp (X))
\] 
with $d$ the Krull dimension of $R/\fp$. Lemma~\ref{lem:Tate-local} gives the second equivalence below:
\[
\gamma_{\fp}(\uGproj A)\xra{\ \sim\ } \gamma_\fp(\cat{D}) \xra{\ \sim\ } \gamma_{\fp}(\sing A)\,,
\]
whereas the first one is induced by Proposition~\ref{pr:buchweitz},
and both these are compatible with the $R_\fp$ actions.  It remains to recall Corollary~\ref{cor:Serre-duality-GP}.
\end{proof}

\subsection*{The \fgc condition}
As in \eqref{ass:R}, let $R\subseteq \HH^*(A/k)$ be a connected, noetherian $k$-subalgebra. Assume in addition that for each $M\in \mod A$ the $R$-module  $\Ext^*_A(M,M)$ is finitely generated. Said otherwise, the algebra $A$ satisfies the \fgc condition with respect to $R$, introduced in \cite{Erdmann/Holloway/Taillefer/Snashall/Solberg:2004}. As noted in \cite[Proposition~5.7]{Solberg:2006}, this condition implies that the Hochschild cohomology algebra $\HH^*(A/k)$ is finitely generated. The \fgc condition holds for several interesting classes of finite dimensional Hopf algebras, including group algebras of finite groups (or group schemes),  small quantum groups,  and also for finite dimensional commutative complete intersection rings; see \cite{Erdmann/Holloway/Taillefer/Snashall/Solberg:2004, Ginzburg/Kumar:1993, Mastnak/Pevtsova/Schaunberg/Witherspoon:2010, Solberg:2006}.

When $A$ satisfies the \fgc condition, it follows from Corollary~\ref{cor:Serre-duality-GP} that  the triangulated category $\tors_{\fp}(\uGproj A)$, and hence also anything equivalent to it, has AR-triangles. This is explained in \cite[Section~7]{Benson/Iyengar/Krause/Pevtsova:2019a}, for which we refer the reader also for other consequences of Serre duality.

\begin{remark}
\label{rem:fgc}
 Assume that the algebra $A$ satisfies the \fgc condition with respect to $R$ and set $\fr:=R^{\geqslant 1}$, the homogenous maximal ideal of $R$. Set $\cat D:=\dbcat A$.

\begin{claim}
The triangulated category $\tors_{\fr}(\cat D)$ is equivalent to  $\cat{D}^{\sfb}(\proj A)$.
\end{claim}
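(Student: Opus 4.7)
The plan is to identify the objects of $\tors_{\fr}(\cat D)$ with the perfect complexes by translating the $\fr$-torsion condition into a finite-dimensionality condition and then invoking a classical support-variety criterion under \fgc. I would carry this out in three steps.

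First, I would observe that since $R^{0}=k$, the multiplicatively closed set $R\setminus \fr$ coincides with $k^{\times}$, whose elements already act invertibly on every graded morphism group of $\cat D$ by $k$-linearity. Hence localising morphisms at $\fr$ is an equivalence, and $\tors_{\fr}(\cat D)$ is simply the full triangulated subcategory of $\cat D$ consisting of those $X$ for which $\End_{\cat D}^{*}(X)=\Ext_{A}^{*}(X,X)$ is $\fr$-torsion as an $R$-module, with morphisms inherited from $\cat D$.

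Next, I would rephrase the torsion condition using \fgc. Since $\Ext_{A}^{*}(X,X)$ is a finitely generated module over the connected noetherian ring $R$, being $\fr$-torsion is equivalent to being annihilated by some power $\fr^{n}$; and because $R/\fr^{n}$ is a finite-dimensional $k$-algebra, this is in turn equivalent to $\Ext_{A}^{*}(X,X)$ being a finite-dimensional $k$-vector space, that is, $\Ext_{A}^{i}(X,X)=0$ for all $|i|$ sufficiently large.

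Finally, I would identify this finite-dimensionality with perfectness. One direction is immediate: if $X\in \cat{D}^{\sfb}(\proj A)$ then $\Ext_{A}^{i}(X,-)$ vanishes outside a bounded range, so certainly $\Ext_{A}^{*}(X,X)$ is finite-dimensional. Conversely, if $\Ext_{A}^{*}(X,X)$ is $\fr$-torsion, its support in $\Spec R$ is contained in $\{\fr\}$ and hence meets $\Proj R$ in the empty set; the support-variety characterisation of finite projective dimension under \fgc, proved by Erdmann, Holloway, Snashall, Solberg, and Taillefer \cite{Erdmann/Holloway/Taillefer/Snashall/Solberg:2004} (see also \cite{Solberg:2006}), then forces $X$ to be perfect.

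The main obstacle is precisely this converse implication in the last step: passing from emptiness of the support variety to finite projective dimension. This is the genuinely non-trivial content of the \fgc framework, which is why I would cite the Erdmann-Holloway-Snashall-Solberg-Taillefer theorem at this point rather than argue from scratch; the remaining two steps are then book-keeping about graded modules over the connected noetherian ring $R$.
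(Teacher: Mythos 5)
Your argument is correct, and its first two steps coincide with the paper's reading of the definition: since $R^{0}=k$, localising at $\fr$ changes nothing, so $\tors_{\fr}(\cat D)$ is the full subcategory of complexes whose graded endomorphisms are $\fr$-torsion, and under \fgc this torsion condition amounts to $\Ext^{*}_{A}(X,X)$ being finite dimensional. Where you genuinely diverge is the converse of the final step. The paper never appeals to support-variety theory: it observes that the $R$-action on every $\Ext^{*}_{A}(X,N)$ factors through $\phi_{X}$, so $\fr$-torsion of $\Ext^{*}_{A}(X,X)$ forces $\Ext^{*}_{A}(X,A/J)$ to be $\fr$-torsion as well ($J$ the Jacobson radical); being finitely generated over $R$ by \fgc, this module is then bounded, and vanishing of $\Ext^{i}_{A}(X,A/J)$ for $i\gg 0$ means the minimal projective resolution terminates, i.e.\ $X$ is perfect. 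That argument is short, self-contained, and is essentially the proof of the theorem you cite, so what your route buys is brevity at the cost of a black box. If you keep your version, two points need care. First, the theorem of \cite{Erdmann/Holloway/Taillefer/Snashall/Solberg:2004} is formulated for self-injective algebras and modules, whereas here $A$ is merely Gorenstein and $X$ is a bounded complex; you should therefore lean on the version for arbitrary finite dimensional algebras and complexes in \cite{Solberg:2006} (or reduce to a module by truncating a projective resolution of $X$, which identifies $X$ with a shifted module up to perfect complexes), and note that the subalgebra playing the role of the cohomology ring there is allowed to be an arbitrary $R\subseteq \HH^{*}(A/k)$ satisfying the finiteness hypotheses, as is the case here. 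Second, your step (2) uses finite generation of $\Ext^{*}_{A}(X,X)$ for a \emph{complex} $X$, while \fgc as stated in this section concerns modules; this extension is routine (d\'evissage along triangles, plus the direct-sum trick to pass from $\Ext^{*}_{A}(M,M)$ to $\Ext^{*}_{A}(M,N)$), and the paper needs the same extension for $\Ext^{*}_{A}(X,A/J)$, but it should be said.
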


Indeed, by definition, $\tors_\fr(\cat{D})$  consists of complexes $M\in\cat{D}$ for which $\Ext_R^*(M,M)$ is $\fr$-torsion as an $R$-module; equivalently,  $\Ext_R^*(M,N)$ is $\fr$-torsion for each $N\in\cat{D}$. In particular, $\Ext_R^*(M,A/J)$ is $\fr$-torsion, where $J$ is the Jacobson radical of $A$. Since the $R$-module $\Ext_R^*(M,A/J)$ is finitely generated, by the \fgc condition, this last property is equivalent to  $\Ext_R^*(M,A/J)=0$ for $i\gg 0$, that is to say,  $M$ is perfect.

Given this claim, one can extend Theorem~\ref{thm:main1} even to $\fr$: For perfect complexes $X,Y$ from~\cite{Happel:1987} we get the second isomorphism below:
\begin{align*}
\Hom_R(\Hom^*_{\cat D}(X,Y),I(\fr)) 
	&\cong \Hom_k(\Hom^*_{\cat D}(X,Y),k) \\
	&\cong \Hom_{\cat D}(Y,\nu(X))\,.
\end{align*}
The first one is by adjunction as $I(\fr)=\Hom_k(R,k)$; see \cite[Lemma~A.2]{Benson/Iyengar/Krause/Pevtsova:2019a}. This is the stated Serre duality on $\tors_\fr(\cat{D})$, for the Krull dimension of $R/\fr$ is $0$ and $R_\fr=R$. 
\end{remark}

\begin{remark}
  Concerning the claim in Remark~\ref{rem:fgc}: Even when $A$ does not
  satisfy the \fgc condition with respect to $R$, the subcategory
  $\tors_{\fr}(\cat D)$ contains the perfect complexes, but it is
  possible that it contains more. To see what is at stake, consider
  the special case that $A$ is self-injective.  Let $M\in \mod A$ be a
  module containing $A$ as a direct summand and satisfying
  $\Ext^i_A(M,M)=0$ for all $i >0$. It is easy to verify that $M$ is
  in $\tors_{\fr}(\cat D)$. However, it is still unknown, and a
  conjecture of Auslander and Reiten~\cite{Auslander/Reiten:1975},
  whether such an $M$ has finite projective dimension, equivalently
  that $M$ is projective.
\end{remark}

\begin{remark}
\label{rem:varyR}
With $k$ and $A$ as before,   let $ R,S$ be connected, finitely generated, $k$-subalgebras of the Hochschild cohomology algebra $\HH^*(A/k)$. Theorem~\ref{thm:gorenstein}, and so also its corollaries, applies  to the action of $R$, and also of $S$, on $\uGProj A$. In reconciling the two, one can replace $S$ by the $k$-subalgebra of $\HH^*(A/k)$ generated by $R$ and $S$ and assume $R\subseteq S$. Then one has an induced map  $\varphi\colon \Proj S\to \Proj R$ defined by the assignment $\fq\mapsto \fq\cap R$.

Given $\fq$ in $\Proj S$, it is clear that there is an inclusion
\[
\gam_{\fq}(\uGProj A) \subseteq \gam_{\fp}(\uGProj A)\quad\text{where $\fp=\fq\cap R$.}
\]
So the version of Theorem~\ref{thm:gorenstein} for the action of $S$ may be seen as a refinement of the one for the action of $R$. Indeed, in the extremal case $R=k$, one has $\fp=0$ for any $\fq$ in $\Spec S$ and $\gam_{\fp}(\uGProj A)=\uGproj A$.

A more interesting situation occurs when $S$ is finite as an $R$-module. Then there are only finitely many primes $\fq$ in $\Proj S$ lying over a given  $\fp\in\Proj R$, and \cite[Corollary~7.10]{Benson/Iyengar/Krause:2012a} yields a direct sum decomposition
\[
\gam_{\fp}(\uGProj A) \cong \bigoplus_{\fq\in\vf^{-1}(\fp)}\gam_{\fq}(\uGProj A)\,.
\]
This decomposition thus  reflects the ramification, in the sense of commutative algebra, of the inclusion $R\subseteq S$.
\end{remark}

\section{Examples}
In this section we describe some examples of Gorenstein algebras. Throughout $k$ will be a field. The \emph{injective dimension} of a finite dimensional $k$-algebra $A$ is the injective dimension of $A$ viewed as a (left) module over itself; we denote it $\injdim A$. This is also the projective dimension of the $A$-module $\Hom_k(A,k)$.

\begin{proposition}
\label{pr:gor-tensor}
Let $\Gamma$ and $\Lambda$ be finite dimensional $k$-algebras. The finite dimensional $k$-algebra $\Gamma\otimes_k\Lambda$ satisfies
\[
\injdim(\Gamma\otimes_k\Lambda)=\injdim \Gamma +\injdim \Lambda\,.
\]
In particular $\Gamma\otimes_k\Lambda$ is Gorenstein if, and only if, both $\Gamma$ and $\Lambda$ are Gorenstein.
\end{proposition}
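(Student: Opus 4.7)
The plan is to reduce the additivity of injective dimension to additivity of projective dimension of the $k$-duals, and then prove the latter by tensoring resolutions and invoking the Künneth formula over $k$.

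First I would record the identity $\injdim A = \projdim_{A^\op} DA$ for any finite dimensional $k$-algebra $A$. This follows from the duality in \eqref{eq:D-Gthings}: applying $D$ to a projective resolution of $DA$ over $A^\op$ yields an injective coresolution of $A$ over $A$, and conversely. Setting $B := \Gamma\otimes_k\Lambda$ and using $B^\op = \Gamma^\op\otimes_k\Lambda^\op$ together with $DB = D\Gamma\otimes_k D\Lambda$, the theorem reduces to
\[
\projdim_{B^\op}(D\Gamma\otimes_k D\Lambda) \;=\; \projdim_{\Gamma^\op} D\Gamma \;+\; \projdim_{\Lambda^\op} D\Lambda.
\]

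For the upper bound I would take projective resolutions $P^\bullet \to D\Gamma$ and $Q^\bullet \to D\Lambda$ of lengths $m$ and $n$ by finitely generated projectives (possible since $D\Gamma$ and $D\Lambda$ are finite dimensional). Because $\otimes_k$ is exact, the total complex of $P^\bullet \otimes_k Q^\bullet$ resolves $D\Gamma \otimes_k D\Lambda$ over $B^\op$ in length $m + n$; each term $P^i \otimes_k Q^j$ is a direct summand of some $(\Gamma^\op)^a \otimes_k (\Lambda^\op)^b = (B^\op)^{ab}$, hence is $B^\op$-projective. For the lower bound I would invoke the Künneth formula over $k$: identifying
\[
\Hom_{B^\op}(P^i \otimes_k Q^j,\, N_1 \otimes_k N_2) \;=\; \Hom_{\Gamma^\op}(P^i, N_1) \otimes_k \Hom_{\Lambda^\op}(Q^j, N_2)
\]
(immediate when $P^i, Q^j$ are finitely generated free, hence for summands), the $B^\op$-Hom-complex is the $k$-tensor product of the $\Gamma^\op$- and $\Lambda^\op$-Hom-complexes. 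Künneth over $k$ then gives the summand
\[
\Ext^m_{\Gamma^\op}(D\Gamma, N_1) \otimes_k \Ext^n_{\Lambda^\op}(D\Lambda, N_2) \;\subseteq\; \Ext^{m+n}_{B^\op}(DB,\, N_1 \otimes_k N_2),
\]
and choosing $N_1, N_2$ that realise the respective projective dimensions produces the required nonzero class.

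The main technical obstacle is the identification of the Hom-complexes and the $B^\op$-projectivity of $P^i \otimes_k Q^j$; both are routine once the resolutions are taken to be finitely generated. The ``in particular'' then follows immediately: $\injdim B$ is finite if and only if both $\injdim\Gamma$ and $\injdim\Lambda$ are finite, and the same argument applied to $\Gamma^\op \otimes_k \Lambda^\op = B^\op$ handles the right-module condition in the definition of Gorenstein.
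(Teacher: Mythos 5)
Your argument is correct, and it shares the paper's overall skeleton --- pass to $k$-duals via $\injdim A=\projdim DA$, then tensor projective resolutions of $D\Gamma$ and $D\Lambda$ to resolve $D(\Gamma\otimes_k\Lambda)\cong D\Gamma\otimes_kD\Lambda$ over $\Gamma\otimes_k\Lambda$ --- but it differs in the key step giving the lower bound. The paper takes the resolutions $P,Q$ to be \emph{minimal} and proves that $P\otimes_kQ$ is again minimal, the point being that $J(\Gamma)\otimes_k\Lambda+\Gamma\otimes_kJ(\Lambda)$ is a nilpotent ideal of $\Gamma\otimes_k\Lambda$, hence lies in its Jacobson radical, so the differentials of $P\otimes_kQ$ still land in the radicals; minimality then yields the exact value $\projdim(M\otimes_kN)=\projdim M+\projdim N$ in one stroke, including the case of infinite dimensions. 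You instead get the lower bound from the K\"unneth formula over $k$, using the identification $\Hom_{B^\op}(P^i\otimes_kQ^j,N_1\otimes_kN_2)\cong\Hom_{\Gamma^\op}(P^i,N_1)\otimes_k\Hom_{\Lambda^\op}(Q^j,N_2)$ for finitely generated projectives and test modules $N_1,N_2$ realising the two projective dimensions; this is valid (over a field there is no Tor correction, so $\Ext^m\otimes_k\Ext^n$ is a direct summand of $\Ext^{m+n}_{B^\op}$). Your route avoids minimal resolutions and the radical lemma altogether, and in that sense is more robust (it only needs finitely generated projective resolutions and the Hom--tensor identity), whereas the paper's route is shorter once the minimality lemma is in place and yields a statement of independent interest, namely an explicit minimal resolution of $M\otimes_kN$. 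Two small points to tidy: say a word about the infinite case (if, say, $\projdim_{\Gamma^\op}D\Gamma=\infty$, take $n=0$ and $N_2=D\Lambda$ in your K\"unneth argument to see that $\Ext^i_{B^\op}(DB,-)$ is nonzero for arbitrarily large $i$), and note that your identity $\injdim A=\projdim_{A^\op}DA$ is the precise form of the paper's remark that $\injdim A$ equals the projective dimension of $\Hom_k(A,k)$; being explicit about which side is dualised is actually slightly more careful than the paper's phrasing.
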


\begin{proof}
  Set $M:=\Hom_k(\Gamma,k)$ and $N:=\Hom_k(\Lambda,k)$. Let $P$, $Q$
  be minimal projective resolutions of $M$, $N$ respectively. Then
  $P\otimes_kQ$ is a minimal projective resolution of $M\otimes_kN$ by
  the lemma below. It remains to observe that the latter is isomorphic
  to $\Hom_k(\Gamma\otimes_k\Lambda,k)$, as modules over
  $\Gamma\otimes_k\Lambda$.
\end{proof}

\begin{lemma}
Let $P$, $Q$ be minimal projective resolutions of modules
$M$, $N$ over finite dimensional algebras $\Gamma$,
$\Lambda$ respectively. Then $P\otimes_k Q$ is a minimal
projective resolution of the $\Gamma \otimes_k \Lambda$-module
$M \otimes_k N$.
\end{lemma}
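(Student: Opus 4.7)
The plan is to split the argument into two parts: first, that $P \otimes_k Q$ is a projective resolution of $M \otimes_k N$, and second, that this resolution is minimal.

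For the first part, each module $P_i \otimes_k Q_j$ is projective over $\Gamma \otimes_k \Lambda$: a direct summand of a free $\Gamma$-module tensored over $k$ with a direct summand of a free $\Lambda$-module is a direct summand of a free $\Gamma \otimes_k \Lambda$-module. For exactness, since $k$ is a field every $k$-module is flat, so the Künneth formula applied to the nonnegatively graded complexes $P$ and $Q$ gives
\[
H_n(P \otimes_k Q) \cong \bigoplus_{i+j=n} H_i(P) \otimes_k H_j(Q)\,,
\]
which vanishes for $n\ge 1$ and equals $M \otimes_k N$ in degree $0$. Thus $P \otimes_k Q \to M \otimes_k N$ is a projective resolution over $\Gamma \otimes_k \Lambda$.

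For minimality I would invoke the standard criterion that a projective resolution $P \to M$ over a finite dimensional algebra $A$ is minimal if and only if $d(P_i) \subseteq \Rad(A) \cdot P_{i-1}$ for all $i$. The differential of the total complex of $P \otimes_k Q$ is $d_P \otimes 1 \pm 1 \otimes d_Q$, so by minimality of $P$ and $Q$ its image is contained in
\[
\big(\Rad(\Gamma) \otimes_k \Lambda + \Gamma \otimes_k \Rad(\Lambda)\big) \cdot (P \otimes_k Q)\,.
\]
It therefore suffices to prove the inclusion
\[
\Rad(\Gamma) \otimes_k \Lambda + \Gamma \otimes_k \Rad(\Lambda) \subseteq \Rad(\Gamma \otimes_k \Lambda)\,.
\]
Each summand on the left is a two-sided ideal of $\Gamma \otimes_k \Lambda$, and each is nilpotent because $\Rad(\Gamma)$ and $\Rad(\Lambda)$ are nilpotent (both algebras being finite dimensional). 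Their sum is then a nilpotent two-sided ideal, and every nilpotent two-sided ideal lies in the Jacobson radical.

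The main subtlety, which I would flag but not dwell on, is that over a general field $k$ the quotient $(\Gamma/\Rad\Gamma) \otimes_k (\Lambda/\Rad\Lambda)$ need not be semisimple, so the displayed inclusion can be strict. However only the containment in $\Rad(\Gamma \otimes_k \Lambda)$, not equality, is needed for the minimality criterion, which is what lets the argument go through with no separability hypothesis on $k$. This is the one place in the proof where a little care is required.
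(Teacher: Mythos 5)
Your proof is correct and follows essentially the same route as the paper: both hinge on the observation that $\Rad(\Gamma)\otimes_k\Lambda + \Gamma\otimes_k\Rad(\Lambda)$ is a nilpotent two-sided ideal, hence contained in $\Rad(\Gamma\otimes_k\Lambda)$, combined with the criterion that minimality means each differential lands in the radical. The only difference is that you also spell out exactness of the total complex via the K\"unneth formula, which the paper leaves implicit; your remark that only the containment (not equality) in the radical is needed, so no separability hypothesis on $k$ is required, is exactly the right point of care.
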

\begin{proof}
  With $J(-)$ denoting the Jacobson radical,
  $J(\Gamma)\otimes_k \Lambda + \Gamma \otimes_k J(\Lambda)$ is a
  nilpotent two-sided ideal in $\Gamma \otimes_k \Lambda$, and
  therefore it is contained in $J(\Gamma \otimes_k \Lambda)$. So for
  any projective $\Gamma$-module $U$ and projective $\Lambda$-module
  $V$, we have
\[ \Rad(U)\otimes_k V + U \otimes_k \Rad(V)
\subseteq \Rad(U\otimes_k V) \]
as $\Gamma\otimes_k \Lambda$-modules.
The lemma now follows from the fact that
a projective resolution is minimal if and only if the image of
each differential lands in the radical of the next module.
\end{proof}

Using the result above, one can construct Gorenstein algebras of any
given injective dimension, as long as we find one whose injective
dimension is one. The next example describes such an algebra.  In
particular its class of Gorenstein projective modules is not the same
as the class of Gorenstein injective modules; confer
Remark~\ref{rem:not-self-inj}. Another noteworthy feature of the algebra
is that it is not of finite global dimension.

\begin{example}
\label{ex:GPI}
Let $k$ be a field,  $k[\varepsilon]$ the $k$-algebra of dual numbers, and set
\[
\Lambda:= k[\varepsilon]\otimes_k \Gamma \quad \text{where $\Gamma=\begin{bmatrix} k & k\\0 & k\end{bmatrix}$.}
\]
The $k$-algebra $\Lambda$ has injective dimension one over itself, and can be realised as the path algebra of the quiver
\[
\begin{tikzcd}
   \scriptstyle{1}\arrow[out=150,in=-150,loop,swap,"\varepsilon_1"]\arrow[r,"\alpha"]&
   \scriptstyle{2} \arrow[out=30,in=-30,loop,"\varepsilon_2"]
\end{tikzcd}
\] 
modulo the relations
\[
\varepsilon_1^2=0=\varepsilon_2^2\qquad\textrm{and}\qquad\alpha\varepsilon_1=\varepsilon_2\alpha.
\]

The algebra $\Lambda$ is representation finite and has precisely nine indecomposable modules. There are two simple modules corresponding to the vertices $1$ and $2$. The following diagram shows the Auslander-Reiten quiver. The vertices represent the indecomposables via their composition series. There is a solid arrow $X\to Y$ if there is an irreducible morphism, and a dotted arrow $X\dashrightarrow Y$ when $Y=D\Tr X$.
\[
\begin{tikzcd}
   [nodes in empty cells, cells={nodes={minimum height=2.5em}},
   cells={nodes={minimum width=2.5em}}] &&
   \tikz{\node[draw=black,rounded
     corners=0pt]{$\begin{smallmatrix} 1\,\phantom{1}\, 2 \,\phantom{2}\\
       \phantom{1}\,
       1\,\phantom{2}\,2\end{smallmatrix}$}}\arrow[rd]\arrow[lldd,dotted,bend
   right=35]\\
   &\tikz{\node[draw=black,thick,rounded
     corners=0pt]{$\begin{smallmatrix} 1\\
       1\end{smallmatrix}$}}\arrow[ru]&&\tikz{\node[draw=black,rounded
     corners=0pt,fill=black!10]{$\begin{smallmatrix} 2\\
       2\end{smallmatrix}$}}\arrow[rd]\arrow[ll,dotted]\\
   \tikz{\node[draw=black,thick,rounded
     corners]{$\begin{smallmatrix}
       1\end{smallmatrix}$}}\arrow[ru]\arrow[rrrd]\arrow[rrdd,dotted,bend
   right=35]&&
   \tikz{\node[draw=black,thick,rounded corners=0pt,fill=black!10]{$\begin{smallmatrix} 2\\ 1\,\phantom{1}\, 2\\
       1\end{smallmatrix}$}}&& \tikz{\node[draw=black,rounded
     corners,fill=black!10]{$\begin{smallmatrix}
       2\end{smallmatrix}$}} \arrow[ld]\arrow[lluu,dotted,bend
   right=35]\\
   &\tikz{\node[draw=black,thick,rounded corners]{$\begin{smallmatrix} 1\,\phantom{1}\, 2\\
       1\end{smallmatrix}$}}\arrow[lu]\arrow[ru,crossing
   over]\arrow[from=ruuu,crossing
   over]\arrow[rrru,crossing over]&&
   \tikz{\node[draw=black,rounded corners,
     fill=black!10]{$ \begin{smallmatrix} 2\\1\,\phantom{2}\,
       2\end{smallmatrix}$}}\arrow[luuu, crossing over]
   \arrow[ld]\arrow[from=lu,crossing over] \arrow[ll,leftrightarrow,dotted]\\
   &&\tikz{\node[draw=black,thick,rounded corners,
     fill=black!10]{$\begin{smallmatrix} 2\\
       1\end{smallmatrix}$}}\arrow[lu]\arrow[rruu,dotted,bend
   right=35]
 \end{tikzcd}
\]
The Gorenstein projectives have a bold frame, the Gorenstein injectives are shaded, and the modules of finite projective and injective dimension have rectangular shape. A module belongs to all three classes if and only if it is projective and injective; there is a unique indecomposable with this property.

One way to justify these computations is via \cite[Theorem~2]{Ringel/Zhang:2017} due to Ringel and Zhang that sets up a bijection between indecomposable non-projective Gorenstein projective modules over $k[\varepsilon]\otimes_kkQ$ and the indecomposable $kQ$-modules, for any quiver $Q$.
\end{example}

\end{document}